\numberwithin{equation}{section}
\newtheorem{Thm}{Theorem}[section]
\newtheorem{Prop}[Thm]{Proposition}
\newtheorem{Lem}[Thm]{Lemma}
\theoremstyle{definition}\newtheorem{Def}[Thm]{Definition}
\newtheorem{Rem}[Thm]{Remark}
\theoremstyle{definition}
\newcommand{\N}{\mathbb{N}}
\newcommand{\Q}{\mathbb{Q}}
\newcommand{\Z}{\mathbb{Z}}
\newcommand{\R}{\mathbb{R}}
\newcommand{\Int}{\textnormal{Int}}
\newcommand{\mcP}{\mathcal{P}}
\newcommand{\Pirr}{\mcP^{{\rm irr}}}
\newcommand{\Gal}{{\rm Gal}}
\newcommand{\W}{\widehat{W}}
\newcommand{\mcW}{\mathcal{W}}
\newcommand{\K}{\widehat{K}}
\newcommand{\abK}{\overline{\K}}
\newcommand{\V}{\widehat{V}}
\newcommand{\abV}{\overline{\V}}
\newcommand{\PL}{\mathbb{P}^1}
\begin{document}

\title{Transcendental extensions of a valuation domain of rank one}

\date{11 May 2016}

\author{Giulio Peruginelli}


\leftmark{\noindent  accepted for publication in the Proc. Amer. Math. Soc. (2016).}

{\let\newpage\relax\maketitle} 

\begin{abstract}
 Let $V$ be a valuation domain of rank one and quotient field $K$. Let $\abK$ be a fixed algebraic closure of the $v$-adic completion $\K$ of $K$ and let $\abV$ be the integral closure of $\V$ in $\abK$. We describe a relevant class of valuation domains $W$ of the field of rational functions $K(X)$ which lie over $V$, which are indexed by the elements  $\alpha\in\abK\cup\{\infty\}$, namely, the valuation domains $W=W_{\alpha}=\{\varphi\in K(X) \mid \varphi(\alpha)\in\abV\}$. If $V$ is discrete and $\pi\in V$ is a uniformizer, then a valuation domain $W$ of $K(X)$ is of this form if and only if the residue field degree $[W/M:V/P]$ is finite and $\pi W=M^e$, for some $e\geq 1$, where $M$ is the maximal ideal of $W$. In general, for $\alpha,\beta\in\abK$ we have $W_{\alpha}=W_{\beta}$ if and only if $\alpha$ and $\beta$ are conjugated over $\K$. Finally, we show that the set $\Pirr$ of irreducible polynomials over $\K$ endowed with an ultrametric distance introduced by Krasner is homeomorphic to the space  $\{W_{\alpha} \mid \alpha\in\abK\}$ endowed  with the Zariski topology.
\end{abstract}
\medskip
\noindent{\small \textbf{Keywords}: Discrete valuation domain, Valuation overrings, Integer-valued polynomial, Pr\"ufer domain, Zariski topology, Ultrametric space.\\ \textbf{MSC codes}: 16W60, 13J10, 13B25, 13F20}


\section{Introduction}

Let $V$ be a valuation domain of rank 1, quotient field $K$ and let $v$ be the associated valuation. Let $\V$ and $\K$ be the $v$-adic completions of $V$ and $K$, respectively. Given a field extension $K\subset F$ and a valuation domain $W$ of $F$, we say that 
$W$ lies above $V$ if $W\cap K=V$. If $F=K(\theta)$ is a simple separable algebraic extension of $K$ and $p\in K[X]$ is the minimal polynomial of $\theta$, then the valuation domains $W$ of $F$ which lie above $V$ are well-known: they are rank $1$ valuation domains which are in one-to-one correspondence with the irreducible factors over $\K$ of $p(X)$ (see for example \cite[Chapter VI, \S. 8, 2., Proposition 2 and Corollaire 2]{Bourb} or \cite[Chapter 6, B.]{Rib}). More precisely, there exists a finite set of elements $\{\theta_1,\ldots,\theta_n\}$ in a fixed algebraic closure $\abK$ of $\K$ (i.e., the roots of $p(X)$ in $\abK$) such that the above valuation domains $W$ are equal to $W_{\theta_i}=\{g(\theta)\in K(\theta) \mid g(\theta_i) \textnormal{ is integral over }\V\}$, for $i=1,\ldots,n$; moreover, $W_{\theta_i}=W_{\theta_j}$ if and only if $\theta_i,\theta_j$ are conjugated over $\K$ (i.e.,   $\theta_i,\theta_j$ are roots of the same irreducible factor of $p(X)$ over $\K$).

If instead we consider a simple transcendental extension $K(X)$ of $K$, the structure of the set of valuation domains of $K(X)$ which lie above $V$ is much richer (see for example  \cite{APZ, Kuhlmann,LopTart,MacLane,MacLaneDuke,
NartCo, Vaquie}). To begin with, it is well-known that the rank of $W$ is $1$ or $2$ (see for example \cite[Chapt. VI, \S 10, Corollaire 1, p. 162]{Bourb}). The aim of this paper is to give an explicit description of a particular class of these valuation domains, which, likewise the previous algebraic case, arise from the elements $\alpha\in\abK\cup\{\infty\}$, namely $W=W_{\alpha}=\{\varphi\in K(X) \mid \varphi(\alpha)\textnormal{ is integral over } \V\}$.  The description of these valuation domains is accomplished in Proposition \ref{Vpalpha}. When $V$ is a discrete valuation domain of rank one, we give in Theorem \ref{description W} sufficient conditions on a valuation domain $W$ of $K(X)$ lying over $V$ to be of the form $W_{\alpha}$, for some $\alpha\in\abK\cup\{\infty\}$, namely:
\begin{itemize}
\item[i)] the residue field degree $[W/M:V/P]$ is finite;
\item[ii)] $\pi W=M^e$, for some $e\geq 1$;
\end{itemize}
where $\pi$ is a uniformizer of $V$ and $M$ is the maximal ideal of $W$. 

In contrast with the finite algebraic case recalled above, where the extensions of $V$ to $K(\theta)$ are given by a finite set of elements in $\abK$, in the transcendental case one has to consider all the uncountably many elements of $\abK$ in order to obtain all the valuation domains $W\subset K(X)$ of the above form. As in the algebraic case, for $\alpha,\alpha'\in\abK$,  $W_{\alpha}=W_{\alpha'}$ if and only if $\alpha$ and $\alpha'$ are conjugate over $\K$ (Theorem \ref{equivalent conditions}). Moreover, it turns out that these valuation domains are precisely the unitary valuation overrings of a class of generalized integer-valued polynomial rings which was introduced in \cite{LopWer}: given a finite field extension $F$ of $K$, let $V_F$ be the integral closure of $V$ in $F$. We set
\begin{equation}\label{IntQOK}
\Int_{K}(V_F)=\{f\in K[X] \mid f(V_F)\subseteq V_F\}
\end{equation}
Note that $\Int_{K}(V_F)$ is the contraction to $K[X]$ of $\Int(V_F)=\{f\in F[X] \mid f(V_F)\subseteq V_F\}$, the classical ring  of integer-valued polynomials over $V_F$. Given a valuation domain $W\subset K(X)$ as above, we show that there exists a finite extension $F$ of $K$ such that $W$ is an overring of $\Int_K(V_F)$. 

In the second section we give the characterization of the aforementioned valuation domains $W$ of $K(X)$. We show how the valuation domains $W_{\alpha}$, for $\alpha\in\K$, are related to the work of Kaplansky about immediate extensions of a valued field in \cite{Kaplansky}, see Remark \ref{Kap}. More generally, when $\alpha$ ranges in $\abK$, we show the connection with the work of MacLane in \cite{MacLane, MacLaneDuke} about approximations of transcendental extensions of a DVR, see Remark \ref{MacLane valuations}. The valuation domains $W_{\alpha}$, $\alpha\in\abK$, appear also in the recent paper \cite{NartCo}, which deals with extensions of a DVR to a transcendental extension of its field of quotients in the spirit of MacLane. Furthermore, as an application, we show that the Pr\"ufer domains of polynomials between $\Z[X]$ and $\Q[X]$ constructed in \cite{LopWer} can be represented as rings of integer-valued polynomials (Remark \ref{Unitary val dom LopWer}).  In the third section, for a general rank one valuation domain $V$, we show that the set $\mathfrak{W}=\{W_{\alpha} \mid \alpha\in \abK\}$ is in one-to-one correspondence with the following sets: the set $\Pirr$ of irreducible polynomials over $\K$; the set $\mathfrak{W}_{K[X]}=\{W_{\alpha}\cap K[X] \mid \alpha\in \abK\}$ (Theorem \ref{equivalent conditions}). In particular, this allows us to reduce many considerations to polynomials rather than to rational functions.  Moreover, if we endow $\Pirr$ with the ultrametric distance $\Delta(p,q)=\min\{|\alpha-\beta| \mid \alpha,\beta\in\abK, p(\alpha)=q(\beta)=0\}$ and $\mathfrak{W}$ and $\mathfrak{W}_{K[X]}$ with the Zariski topology, these three spaces are homeomorphic (Theorem \ref{homeomorphism}).  A first evidence of this result is contained in a paper of Gilmer, Heinzer, Lantz and Smith, where, for a DVR $V$ with finite residue field, it is proved that the unitary maximal spectrum of the ring $\Int(V)$ (that is, those maximal ideals whose contraction to $V$ is equal to the maximal ideal of $V$) is homeomorphic to $\V$ (see \cite[p. 677]{GHLS}). Since $\V$ is homeomorphic to the space of monic linear  polynomials endowed with the above distance $\Delta(\cdot,\cdot)$, Theorem \ref{homeomorphism} is a  generalization of this result.

\section{Main result}

Throughout the paper, we adopt the following notation. Let $V$ be a valuation domain of rank $1$ with quotient field $K$. We denote by $v$ the associated valuation on $K$ and by $\widehat P,\V,\K$ the $v$-adic completion of $P,V,K$, respectively. Note that $\V\cap K=V$, $\V$ is a valuation domain of rank $1$ of $\K$ with residue field isomorphic to the residue field of $V$ and $v$ extends uniquely to the valuation of $\K$ associated to $\V$, which we still denote by $v$. Let $\abK$ be a fixed algebraic closure of $\K$ and $\abV$ the integral closure of $\V$ in $\abK$. It is well known that $v$ admits a unique extension to $\abK$ (\cite[Chapt. 5, A.]{Rib}), which again we denote by $v$ and whose valuation ring is $\abV$, thus $\abV=\{\alpha\in\abK \mid v(\alpha)\geq 0\}$. For $\alpha\in\abK$, we denote by $\V_{\alpha}$ the valuation domain of rank $1$ of the finite field extension $\K(\alpha)$ of $\K$, which is equal to the integral closure of $\V$ in $\K(\alpha)$, and by $\widehat{P}_{\alpha}$ the maximal ideal of $\V_{\alpha}$. We denote by $v_{\alpha}$ the valuation of $\K(\alpha)$ associated to $\V_{\alpha}$ (thus, the restriction of $v$ to $\K(\alpha)$). 

We recall the notion of Gaussian extension of $v$. Given $f(X)=\sum_{i\geq0}^n a_i X^i\in K[X]$, we set $v_G(f)\doteqdot\min\{v(a_i)\mid i=0,\ldots,n\}$, and this function extends in the natural way to a valuation of $K(X)$, called the Gaussian extension of $v$. The valuation domain of the Gaussian extension is equal to $V[X]_{P[X]}$ (see for example \cite[Proposition 18.7]{Gilm} or \cite[Chapt. VI, \S 10]{Bourb}) and is the unique extension of $V$ to $K(X)$ such that $X$ is transcendental over the residue field  (\cite[Chapt. VI, \S 10, Prop. 2]{Bourb}). 

Given a field extension $K\subset F$, a valuation domain $W$ of $F$ is immediate over $K$ if the value groups and the residue fields of $W$ and $W\cap K$ are the same, respectively. For $\alpha\in\abK$, note that the elements of the value group of $\V_{\alpha}\subset \K(\alpha)$ are of the form $v_{\alpha}(g(\alpha))$, where $g\in\K[X]$. If $h\in K[X]$ is such that $v_G(h-g)$ is sufficiently greater than $v_{\alpha}(g(\alpha))$, then 
$v_{\alpha}(g(\alpha)-h(\alpha))>v_{\alpha}(g(\alpha))$, so that $\V_{\alpha}$ is immediate over $K(\alpha)$ (see \cite[\S10, Exercise 2, p. 193]{Bourb}). 

In order to describe all the possible valuation domains of $K(X)$ we are interested in, we need to consider the projective line over $\abK$, that is, $\PL(\abK)\doteqdot\abK\cup\{\infty\}$. Given a rational function $\varphi\in K(X)$ and $\alpha\in \abK$, $\varphi(\alpha)$ is an element of $\PL(\abK)$; we say that $\varphi$ is not defined at $\alpha$ if $\varphi(\alpha)=\infty$. We also set $\varphi(\infty)\doteqdot \psi(0)$, where $\psi(X)=\varphi(1/X)$, so that each rational function on $K$ determines a map from $\PL(\abK)$ to itself, which is continuous with respect to the $v$-adic topology (see also Remark \ref{continuity rat fun}).

We introduce now the following definition.
\vskip0.3cm
\begin{Def}\label{Def Vpalpha}
Let $\alpha\in\PL(\abK)$. We consider the set of rational functions $\varphi(X)$ over $K$ which are defined at $\alpha$ and such that their evaluation at $\alpha$ is integral over $\V$:
$$W_{\alpha}\doteqdot\{\varphi\in K(X) \mid \varphi(\alpha)\in\abV\}.$$
\end{Def}
\noindent Clearly, for $\alpha\in\PL(\abK)$, $\varphi\in W_{\alpha}\Leftrightarrow \varphi(\alpha)\in \V_{\alpha}\Leftrightarrow v(\varphi(\alpha))\geq0$, where by convention we set $\V_{\infty}=\V$ and $\widehat{P}_{\infty}=\widehat{P}$. We also set $\K(\infty)\doteqdot\K$.  The following proposition characterizes $W_{\alpha}$. It is a standard result, but for the sake of the reader we give a proof here.

\begin{Prop}\label{Vpalpha}
Let  $\alpha\in\PL(\abK)$. Then $W_{\alpha}$ is a valuation domain of $K(X)$ which lies over $V$ with maximal ideal $M_{\alpha}=\{\varphi\in K(X) \mid v(\varphi(\alpha))>0\}$ and rank $1$ or $2$. The rank of $W_{\alpha}$ is $1$ if and only if $\alpha$ is in $\abK$ and is transcendental over $K$, it is $2$ if either $\alpha\in\abK$ is algebraic over $K$ or $\alpha=\infty$. If the rank of $W_{\alpha}$ is $2$, $\alpha\in\abK$ and $q\in K[X]$ is the minimal polynomial of $\alpha$ over $K$, then the DVR $K[X]_{(q)}$ is the valuation overring of $W_{\alpha}$. If $\alpha=\infty$, then $K[\frac{1}{X}]_{(\frac{1}{X})}$ is the valuation overring of $W_{\infty}$. Moreover, the residue field of $W_{\alpha}$ is isomorphic to the residue field of $\V_{\alpha}$ and the value group of $W_{\alpha}/Q_{\alpha}$ is also isomorphic to the value group of $\V_{\alpha}$, where $Q_{\alpha}=0$ if $W_{\alpha}$ has rank $1$ and $Q_{\alpha}$ is the height one prime of $W_{\alpha}$ if $W_{\alpha}$ has rank $2$.
\end{Prop}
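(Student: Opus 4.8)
The plan is to recognise $W_{\alpha}$, in each case, as the pullback of a valuation ring of a residue field along ``evaluation at $\alpha$'', and to invoke the standard theory of composite valuation rings (see e.g.\ \cite[Chapt.\ VI]{Bourb} or \cite{Gilm}). First the formal points. Since $\abV$ is the valuation ring of the unique extension of $v$ to $\abK$, for $\varphi\in K(X)$ defined at $\alpha\in\abK$ one has $\varphi(\alpha)\in\abV\iff v(\varphi(\alpha))\geq 0\iff\varphi(\alpha)\in\V_{\alpha}$, and $v(\varphi(\alpha))>0\iff\varphi(\alpha)\in\widehat P_{\alpha}$; hence, once $W_{\alpha}$ is known to be a valuation ring, its maximal ideal is as claimed, and $W_{\alpha}\cap K=\abV\cap K=\{c\in K\mid v(c)\geq0\}=V$, so $W_{\alpha}$ lies over $V$. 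Moreover the case $\alpha=\infty$ reduces to $\alpha=0$: the $K$-automorphism $\sigma$ of $K(X)$ with $\sigma(X)=1/X$ satisfies $\sigma(\varphi)(0)=\varphi(\infty)$, so $W_{\infty}=\sigma^{-1}(W_{0})$, and $\sigma$ transports every assertion (it carries $K[X]_{(X)}$ to $K[1/X]_{(1/X)}$, and $\V_{0}=\V=\V_{\infty}$).

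Suppose $\alpha\in\abK$ is transcendental over $K$. Then $X\mapsto\alpha$ is a $K$-isomorphism $K(X)\xrightarrow{\ \sim\ }K(\alpha)$ under which $W_{\alpha}$ corresponds to $\abV\cap K(\alpha)=\V_{\alpha}\cap K(\alpha)$, the restriction of $v$ to $K(\alpha)$; this is a rank-one valuation ring (its value group is a nonzero subgroup of the rank-one group $v(\abK^{\times})$), so $W_{\alpha}$ has rank $1$ and $Q_{\alpha}=0$. For its residue field and value group I would use the remark preceding the Proposition: $\V_{\alpha}$ is immediate over $K(\alpha)$, so $\V_{\alpha}\cap K(\alpha)$ and $\V_{\alpha}$ share residue field and value group; transporting back gives the last two claims.

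Suppose $\alpha\in\abK$ is algebraic over $K$ with minimal polynomial $q\in K[X]$. A rational function written in lowest terms is defined at $\alpha$ exactly when $q$ does not divide its denominator, i.e.\ exactly on $D:=K[X]_{(q)}$; thus evaluation at $\alpha$ restricts to a ring homomorphism $D\to K(\alpha)$ with kernel $qD$, exhibiting $D$ as a DVR with residue field $K[X]/(q)\cong K(\alpha)$ (the class of $X$ going to $\alpha$). Hence $W_{\alpha}$ is the preimage in $D$ of the valuation ring $V':=\V_{\alpha}\cap K(\alpha)$ of $K(\alpha)$, and by the standard theory of such composite valuation rings $W_{\alpha}$ is a valuation ring of $K(X)$ whose rank is $\operatorname{rank}(D)+\operatorname{rank}(V')=1+1=2$. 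Since $W_{\alpha}\subseteq D$ and $D$ is a valuation ring, $D$ is an overring of $W_{\alpha}$, so $D=(W_{\alpha})_{\mathfrak p}$ for the prime $\mathfrak p=qD\cap W_{\alpha}=qD$; as $D$ is a DVR, this $\mathfrak p$ is the height-one prime $Q_{\alpha}$ of $W_{\alpha}$, $(W_{\alpha})_{Q_{\alpha}}=D=K[X]_{(q)}$ (the asserted overring), and $W_{\alpha}/Q_{\alpha}=V'$. The maximal ideal of $W_{\alpha}$ is the preimage of that of $V'$, the residue field of $W_{\alpha}$ equals that of $V'$, and the value group of $W_{\alpha}$ sits in a short exact sequence of ordered groups with the value group of $V'$ as convex subgroup (the one cut out by $Q_{\alpha}$) and the value group of $D$ as quotient, so $W_{\alpha}/Q_{\alpha}$ has the value group of $V'$. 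Finally, since $\V_{\alpha}$ is immediate over $K(\alpha)$, the residue field and value group of $V'=\V_{\alpha}\cap K(\alpha)$ coincide with those of $\V_{\alpha}$, which completes this case; the case $\alpha=\infty$ now follows by applying the first paragraph to $\alpha=0$ (where $q=X$, $D=K[X]_{(X)}$, $V'=\V$).

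The only real work is the middle step of the last paragraph: verifying that the domain of definition of $\varphi\mapsto\varphi(\alpha)$ is precisely $K[X]_{(q)}$ and that $W_{\alpha}$ is precisely the pullback of $V'$ along the residue map $D\to K(\alpha)$, and then quoting the exact structural statements about composite valuation rings (ranks add, $(W_{\alpha})_{Q_{\alpha}}$ is the overring $D$, the value group is the asserted extension of ordered groups). Granting that, everything else is immediate: the transcendental/algebraic dichotomy is exactly the dichotomy ``$D$ is a field''/``$D$ is a DVR'', hence ``rank $1$''/``rank $2$'', while the replacement of $V'=\V_{\alpha}\cap K(\alpha)$ by $\V_{\alpha}$ in the statements about residue field and value group is the immediateness already established before the Proposition.
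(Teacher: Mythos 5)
Your proposal is correct and follows essentially the same route as the paper: realize $W_{\alpha}$ as the pullback of $\V_{\alpha}\cap K(\alpha)$ along evaluation at $\alpha$ (whose domain of definition is $K[X]_{(q)}$ in the algebraic case), split into the transcendental/algebraic cases, reduce $\alpha=\infty$ to $\alpha=0$ by $X\mapsto 1/X$, and use the immediateness of $\V_{\alpha}$ over $K(\alpha)$ for the residue field and value group claims. The only cosmetic difference is that you quote the standard composite (pullback) valuation machinery for the rank-$2$ structure, where the paper verifies the corresponding facts ($Q_{\alpha}=qK[X]_{(q)}$ is the height-one prime, $K[X]_{(q)}$ is the one-dimensional overring) directly.
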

\begin{proof}
Let $\alpha$ be any given element of $\PL(\abK)$. 
It is straightforward to show that $W_{\alpha}$ is a valuation domain of $K(X)$ which lies over $V$ and with maximal ideal $M_{\alpha}=\{\varphi\in K(X) \mid v(\varphi(\alpha))>0\}$. In fact, given a rational function $\varphi\in K(X)$ defined at $\alpha$, that is, $\varphi(\alpha)\in\K(\alpha)$, either $\varphi(\alpha)\in\V_{\alpha}$ or $\varphi(\alpha)^{-1}\in\V_{\alpha}$, since $\V_{\alpha}$ is a valuation domain of $\K(\alpha)$. Both of these conditions hold if and only if $\varphi\in W_{\alpha}\setminus M_{\alpha}$, so the latter is the multiplicative group of units of the valuation domain $W_{\alpha}$ (and so $M_{\alpha}$ is its maximal ideal). We remark that $W_{\alpha}$ can be realized as the pullback of the valuation domain of rank $1$ $\V_{\alpha}$ of $\K(\alpha)$, via the evaluation homomorphism ${\rm ev}_{\alpha}$ at $\alpha$:
\begin{align}\label{evaluation map}
{\rm ev}_{\alpha}:K(X)&\to\K(\alpha)\cup\{\infty\}\nonumber\\
\varphi(X)&\mapsto 
\left\{
\begin{array}{cl}
\varphi(\alpha),&\textnormal{ if }\varphi(X)\textnormal{ is defined at }\alpha\\
\infty,&\textnormal{ otherwise}
\end{array}
\right.
\end{align}
The image of ${\rm ev}_{\alpha}$ in (\ref{evaluation map}) is contained in $\K(\alpha)$ if and only if $\alpha$ is transcendental over $K$ if and only if the kernel $Q_{\alpha}=\{\varphi\in W_{\alpha} \mid \varphi(\alpha)=0\}$ of the restriction $({\rm ev}_{\alpha})_{|_{W_{\alpha}}}:W_{\alpha}\to \V_{\alpha}$ is  equal to $(0)$. If $\alpha$ is algebraic over $K$, then the domain of definition of ${\rm ev}_{\alpha}$ is the DVR $K[X]_{(q)}$, where $q\in K[X]$ is the minimal polynomial of $\alpha$ over $K$, so ${\rm ev}_{\alpha}(K[X]_{(q)})\cong K[X]/(q)\cong K(\alpha)$. Moreover, since $Q_{\alpha}\cap V=(0)$, the height one prime ideal $Q_{\alpha}$ of $W_{\alpha}$ is equal to $q(X)K[X]_{(q)}$ in the algebraic case and the one-dimensional valuation overring of $W_{\alpha}$ is equal to  $K[X]_{(q)}$. 

Suppose $\alpha\in\abK$. If $\alpha$ is transcendental over $K$, then $({\rm ev}_{\alpha})_{|_{W_{\alpha}}}W_{\alpha}\to \V_{\alpha}$ is an injective homomorphism that extends to an homomorphism of quotient fields ${\rm ev}_{\alpha}:K(X)\to \K(\alpha)$. Thus $v_{\alpha}\circ{\rm ev}_{\alpha}$ defines a valuation of real rank $1$ on $K(X)$ whose associated valuation ring is $W_{\alpha}$, hence $W_{\alpha}$ is a valuation domain of rank $1$ of $K(X)$. Reciprocally, if $W_{\alpha}$ is a valuation domain of rank $1$ of $K(X)$ and $Q_{\alpha}\not=0$ (the kernel of $({\rm ev}_{\alpha})_{|_{W_{\alpha}}}$), then $Q_{\alpha}=M_{\alpha}$ and $Q_{\alpha}\cap V\not=(0)$, which is a contradiction. Therefore, $Q_{\alpha}=0$ and $\alpha$ is transcendental over $K$. Hence, for $\alpha\in\abK$, $W_{\alpha}$ is a valuation domain of rank $1$ if and only if $\alpha$ is transcendental  over $K$ and it is a valuation domain of rank $2$ otherwise.

We prove now the last claims. Since the image of $W_{\alpha}$ via ${\rm ev}_{\alpha}$ is equal to the valuation domain $\V_{\alpha}\cap K(\alpha)$ and $\V_{\alpha}$ is immediate over $K(\alpha)$, it follows that the residue field of $\V_{\alpha}$ is isomorphic to the residue field of $W_{\alpha}$. Moreover, since the kernel of $({\rm ev}_{\alpha})_{|_{W_{\alpha}}}$ is $Q_{\alpha}$, then also the value group of $\V_{\alpha}$ is isomorphic to the value group of $W_{\alpha}/Q_{\alpha}$.

The case $\alpha=\infty$ is treated by considering the change of variable $K(X)\to K(Y)$, $X\mapsto Y\doteqdot\frac{1}{X}$, so $W_{\infty}\subset K(X)$ is easily seen to correspond to $W_{0}'$ in $K(Y)$. 
\end{proof}
\vskip0.2cm

For the rest of this section, we adopt the following assumptions and notations: let $V$ be a discrete valuation domain of rank $1$ (DVR) and let $\pi\in V$ be a uniformizer of $V$, that is, $\pi$ is a generator of the maximal ideal $P$ of $V$. Note that $\V$ and $\V_{\alpha}$ are also DVRs, $\abV$ is a non-discrete valuation domain of rank $1$ and that $\pi$ is also a uniformizer of $\V$. 

On the other hand, recall that a discrete valuation domain is a valuation domain whose value group is discrete (not necessarily of rank $1$, see \cite[Chapter VI, (A) p.48]{ZS2}). Thus, by Proposition \ref{Vpalpha}, $W_{\alpha}$ is a discrete valuation domain of $K(X)$ of rank $1$ or $2$ (notice that $W_{\alpha}/Q_{\alpha}$ is a DVR, see also Remark \ref{explicit representation}) and the residue field of $W_{\alpha}$ is a finite extension of the residue field of $V$. In addition, $\pi W_{\alpha}=M_{\alpha}^e$, where $e$ is the ramification index of $\widehat{P}_{\alpha}$ over $\widehat{P}$. In fact, since we clearly have  $M_{\alpha}^n=\{\varphi\in K(X) \mid \varphi(\alpha)\in \widehat{P}_{\alpha}^n\}$ for each $n\geq 1$, it follows that $\pi \V_{\alpha}=\widehat{P}_{\alpha}^e\Leftrightarrow\pi\in \widehat{P}_{\alpha}^e\setminus \widehat{P}_{\alpha}^{e+1}\Leftrightarrow\pi\in M_{\alpha}^e\setminus M_{\alpha}^{e+1}$.

\begin{Rem}\label{explicit representation}
By Proposition \ref{Vpalpha} and the Hasse Existence Theorem (\cite[Chapter 6, Theorem 4]{Rib}), for each pair of positive integers $e,f\geq 1$, there exists a valuation domain $W=W_{\alpha}$ of $K(X)$ lying over $V$, where $\alpha\in\abK$, whose residue field has degree $f$ over $V/P$ and $\pi W=M^e$.  In fact, by the aforementioned result of Hasse, there exists an algebraic separable extension $\K(\alpha)$ of $\K$ with ramification index $e$ and residue field degree $f$, where $\alpha\in\abK$ (i.e., a primitive element). Now, $W_{\alpha}$ is the pullback of $\V_{\alpha}$ via the evaluation morphism ${\rm ev}_{\alpha}$ and we can apply Proposition \ref{Vpalpha} to get that $W_{\alpha}$ has residue field degree equal to $f$ and $\pi W_{\alpha}=M_{\alpha}^e$. 

Given $\alpha\in\abK$, it is not difficult to give an explicit representation of the associated valuation $w_{\alpha}:K(X)^*\to\Gamma_{W_{\alpha}}$, where $\Gamma_{W_{\alpha}}$ is the corresponding value group. If the rank of $W_{\alpha}$ is $1$ then, as we saw in  Proposition \ref{Vpalpha}, $\V_{\alpha}$ is immediate over $K(X)$ (via the embedding ${\rm ev}_{\alpha}$), so:
$$w_{\alpha}(\varphi)=v_{\alpha}(\varphi(\alpha)),\;\;\forall \varphi\in K(X)^*$$ 
In particular, note that $\Gamma_{W_{\alpha}}=\Gamma_{\V_{\alpha}}$, the value group of $\V_{\alpha}$.

Suppose now the rank of $W_{\alpha}$ is $2$ and let $q\in K[X]$ be the minimal polynomial of $\alpha$. By \cite[Chapt. VI, \S 10, Thm. 17 \& p. 48]{ZS2}, the value group of $W_{\alpha}$ is order-isomorphic to $\Gamma_{q}\times\Gamma_{W_{\alpha}/Q_{\alpha}}$, where $Q_{\alpha}$ is the height one prime ideal of $W_{\alpha}$, $\Gamma_{q}$ is the value group of the DVR $(W_{\alpha})_{Q_{\alpha}}=K[X]_{(q)}$ and $\Gamma_{W_{\alpha}/Q_{\alpha}}$ is the value group of $W_{\alpha}/Q_{\alpha}$. By the proof of Proposition \ref{Vpalpha}, $\V_{\alpha}$ contains $W_{\alpha}/Q_{\alpha}$ and is immediate over it, so, in particular, $\Gamma_{W_{\alpha}/Q_{\alpha}}=\Gamma_{\V_{\alpha}}$. Given $\varphi\in K(X)^*$ there exist $k\in\Z$ and $g,h\in K[X]$, coprime with $q(X)$, such that $\varphi(X)=q(X)^k\cdot \frac{g(X)}{h(X)}$. Then
$$w_{\alpha}(\varphi(X))=(k,v_{\alpha}\left(\frac{g(\alpha)}{h(\alpha)}\right))\in \Gamma_{q}\times\Gamma_{W_{\alpha}/Q_{\alpha}}$$ 
\end{Rem}
\vskip0.5cm
Under the current assumption that $V$ is a DVR, we show in Theorem \ref{description W} that the valuation domains of Proposition \ref{Vpalpha} are the only valuation domains $W$ of $K(X)$ lying over $V$ whose residue field degree (over $V/P$) is finite and such that $\pi W=M^e$, for some $e\geq 1$. Moreover, the valuation domains $W_{\alpha}$, $\alpha\in\abV$, are precisely the unitary valuation overrings of a particular class of rings of integer-valued polynomials which we now recall (a valuation domain $W$ of $K(X)$ lying over $V$ is called unitary if its center on $V$ is the maximal ideal $P$). As in the introduction, for a finite field extension $F$ of $K$, we denote by $V_F$ the integral closure of $V$ in $F$, which is a Dedekind domain. Given a non-zero prime ideal $\mathcal{P}$ of $V_F$, which necessarily lie over $P$, we denote by $V_{F,\mathcal P}$ the localization of $V_F$ at $\mathcal{P}$.  We define also the following ring of integer-valued polynomials:
$$\Int_{K}(V_{F,\mathcal{P}})\doteqdot\{f\in K[X] \mid f(V_{F,\mathcal P})\subseteq V_{F,\mathcal P}\}$$
Let $\widehat{V_{F,\mathcal{P}}}$ be the completion of $V_{F,\mathcal{P}}$. We will use the well-known fact that
\begin{equation}\label{continuity}
\Int_{K}(V_{F,\mathcal{P}})=\Int_{K}(\widehat{V_{F,\mathcal{P}}})=\{f\in K[X] \mid f(\widehat{V_{F,\mathcal{P}}})\subseteq\widehat{V_{F,\mathcal{P}}}\}
\end{equation}
which is based on the continuity of the polynomials with respect to the $v$-adic topology. 

Before giving the main result of this section, we need the following result, which may be well-known, but for the sake of reader we give a complete proof, which is an adaption of the argument given in \cite[Chapter 6, Theorem 1, p. 151]{Rib}. 

\begin{Lem}\label{finite extension complete DVRs}
Let $V\subseteq W$ be complete DVRs with quotient fields $K\subseteq F$ and maximal ideals $P,M$, respectively. Suppose that the residue field degree $[W/M:V/P]$ is finite, equal to a positive integer $f$, and let $e$ be the ramification index of $W$ over $V$. Then $[F:K]=ef$ (so, in particular, $F/K$ is a finite extension).
\end{Lem}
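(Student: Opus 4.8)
The plan is to prove the classical ``$n = ef$'' formula for a finite extension of complete discrete valuation rings, adapting the argument from Ribenboim. The statement to be established is: if $V \subseteq W$ are complete DVRs with fraction fields $K \subseteq F$, residue degree $f = [W/M : V/P]$ finite and ramification index $e$ (so $PW = M^e$), then $[F:K] = ef$; in particular $F/K$ is finite.

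First I would show $[F:K] \geq ef$ by exhibiting $ef$ elements of $W$ that are linearly independent over $K$. Pick $u_1, \dots, u_f \in W$ whose residues $\bar u_1, \dots, \bar u_f$ form a $V/P$-basis of $W/M$, and pick a uniformizer $\tau$ of $W$, so that $v_W(\pi) = e$ where $\pi$ is a uniformizer of $V$. I claim the $ef$ elements $u_i \tau^j$, $1 \leq i \leq f$, $0 \leq j \leq e-1$, are $K$-linearly independent. Suppose $\sum_{i,j} a_{ij} u_i \tau^j = 0$ with $a_{ij} \in K$ not all zero; after clearing denominators we may assume all $a_{ij} \in V$ and not all lie in $P$. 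Group the sum as $\sum_{j=0}^{e-1} b_j \tau^j$ with $b_j = \sum_i a_{ij} u_i \in W$. The key point is that, because $\bar u_1, \dots, \bar u_f$ are $V/P$-independent, $v_W(b_j) \equiv 0 \pmod e$ whenever $b_j \neq 0$ — more precisely $v_W(b_j) = e \cdot v_V(\text{g.c.d. of the } a_{ij} \text{ over } i)$, which is a multiple of $e$ — while $v_W(\tau^j) = j$. Hence the nonzero terms $b_j \tau^j$ have pairwise distinct valuations (distinct residues mod $e$), so their sum cannot be zero by the ultrametric inequality, a contradiction. Therefore $[F:K] \geq ef$.

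The harder direction, and the place where completeness is essential, is $[F:K] \leq ef$: I must show every $x \in F$ is a $K$-linear combination of the $u_i \tau^j$. It suffices to treat $x \in W$ (a general $x$ differs by a power of $\pi$, or of $\tau$). Given $x \in W$, write $v_W(x) = q e + r$ with $0 \leq r < e$; then $x / (\pi^q \tau^r)$ is a unit or has valuation $\geq 0$, so after absorbing $\pi^q$ into $K$-coefficients and a factor $\tau^r$ I reduce to approximating an element of $W$. I then run a successive-approximation argument: given $w \in W$, choose $c_0 = \sum_i \lambda_{i0} u_i$ (with $\lambda_{i0}$ lifts to $V$ of the coordinates of $\bar w$) so that $w - c_0 \in M$; continuing, since $M = \tau W$ and each residue class mod $M$ is hit by a $V/P$-combination of the $\bar u_i$, I produce at each step a combination over $\{u_i \tau^j : 0 \le j \le e-1\}$ with $V$-coefficients, improving the $M$-adic approximation. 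The partial sums form a Cauchy series in $W$; by completeness of $W$ it converges, and — this is the subtle point — the $V$-coefficients $\sum_k \lambda_{ijk} \pi^k$ themselves form Cauchy series in $V$, hence converge in $V$ by completeness of $V$. Thus $w$ is genuinely a $V$-linear (indeed $K$-linear) combination of the $ef$ elements $u_i \tau^j$, giving $W = \sum_{i,j} V \cdot u_i \tau^j$ and hence $F = \sum_{i,j} K \cdot u_i \tau^j$, so $[F:K] \leq ef$.

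Combining the two inequalities gives $[F:K] = ef$, and finiteness of the extension is immediate. The main obstacle is the second half: one must be careful that the double-indexed approximation (the $\tau$-adic digits $j < e$ on one side and the $\pi$-adic digits $k$ on the other) really does reorganize into finitely many convergent series of coefficients in $V$, rather than an infinite $K$-combination; this is exactly where the completeness of \emph{both} $V$ and $W$, together with the fact that $PW = M^e$ (so that powers of $\pi$ exhaust the $M$-adic filtration in steps of size $e$), is used. An alternative, slicker route for $[F:K] \le ef$ would be to note that $W$ is a finitely generated $V$-module precisely because $W/PW$ has $V/P$-dimension $ef$ (it is filtered by $M^j/M^{j+1} \cong W/M$ for $0 \le j < e$) and then invoke completeness via the fact that a separated complete module over a complete Noetherian local ring whose reduction mod the maximal ideal is finite-dimensional is finitely generated (a Nakayama-type lifting argument); but I would present the elementary successive-approximation version since it is self-contained and matches the cited source.
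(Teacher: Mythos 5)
Your proposal is correct and follows essentially the same route as the paper's proof (itself an adaptation of Ribenboim): $K$-linear independence of the $ef$ elements $u_i\tau^j$ by the standard valuation argument (which needs no completeness), and spanning by successive $M$-adic approximation, reorganizing the digits into $P$-adically convergent coefficient series in $K$ using completeness of both $W$ and $V$. The only cosmetic difference is that the paper cites the independence step to Bourbaki/Ribenboim rather than proving it, and handles your ``reorganization'' worry explicitly via the elements $\sigma_n=\pi^q\lambda^r$ of value $n=qe+r$.
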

\begin{proof}  

Let $\pi,\lambda$ be uniformizers of $V$ and $W$, respectively. If $e$ is the ramification index of $W$ over $V$ (which is finite, since both valuation domains are DVRs), then $\pi=\lambda^e\cdot u$, for some $u\in W^*$. Let $y_1,\ldots,y_f\in W$ be such that their residues modulo $M$ form a $V/P$-basis of $W/M$. It is well-known that the elements of the set $\{\lambda^r y_j \mid r=0,\ldots,e-1,j=1,\ldots,f\}$ are linearly independent over $K$ (for example, see first part of the proofs of \cite[Chapt VI, \S8, 1., Lemma 2]{Bourb} or \cite[Chapter 4, F., p. 114]{Rib} (or also 13.9 of Endler's book; note that for this result we don't need the completeness assumption).

For each $n\in\Z$, we set $n=q e+r$, for some $q\in\Z$ and $0\leq r<e$. We set $\sigma_n=\pi^{q}\lambda^{r}\in W$; note that $\sigma_n$ has value $n$, for each $n\in\Z$. Let now $z\in F$. There exist $n_0\in\Z\cong\Gamma_{W}$ and $u_0\in W^*$ such that $z=\sigma_{n_0}u_0$. Since $u_0$ is a unit, there exist $a_{1,0},\ldots,a_{f,0}\in V$ such that $u_0-\sum_{j=1}^f a_{j,0} y_j\in M$. Hence, $z_1=z-(\sum_{j=1}^f a_{j,0} y_j)\sigma_{n_0}$ has value strictly greater than $n_0$, so we may write
$$z=(\sum_{j=1}^f a_{j,0} y_j)\sigma_{n_0}+\sigma_{n_1}u_1$$
for some $n_1>n_0$ and $u_1\in W^*$. If we continue in this way, taking into account that $F$ is $M$-adically complete, we have the following representation for $z$ as a convergent power series:
$$z=\sum_{n\in\N}(\sum_{j=1}^f a_{j,n} y_j)\sigma_{n}$$
Using the definition of $\sigma_n$, we have that
\begin{equation}\label{representation}
z=\sum_{\substack{0\leq r<e\\1\leq j\leq f}}(\sum_{q\in\N} a_{j,n} \pi^{q})\lambda^{r}y_j
\end{equation}
Since $K$ is $P$-adically complete, for each $j$ the series $\sum_{q\in\N} a_{j,n} \pi^{q}$ is convergent, thus an element of $K$. Hence, (\ref{representation}) shows that the elements $\lambda^{r}y_j$, for $0\leq r<e$ and $1\leq j\leq f$, are a basis of $F$ over $K$.\end{proof}

In particular, the above Lemma shows that if $K$ is complete, then there are no DVRs in $K(X)$ above $V$ which have finite residue field degree, in contrast with the case of when $K$ is not complete: if $\alpha\in\K$ is transcendental over $K$, then $W_{\alpha}$ is a DVR of $K(X)$ above $V$ which has finite residue field degree (Proposition \ref{Vpalpha}).

\begin{Thm}\label{description W}
Let $W$ be a valuation domain of $K(X)$ with maximal ideal $M$, such that $W$ lies above $V$. Suppose that 
\begin{itemize}
\item[i)] $[W/M:V/P]=f$,
\item[ii)] $\pi W=M^e$,
\end{itemize}
for some $f,e\geq 1$. Then there exists $\alpha\in\PL(\abK)$ such that $\K(\alpha)/\K$ has ramification index $e$ and residue field degree $f$ and $W=W_{\alpha}$.  In particular, $X\in W\Leftrightarrow\alpha\in \abV\Leftrightarrow W$ is a valuation overring of $\Int_{K}(V_{F,\mathcal{P}})$, for some finite field extension $F$ of $K$ and prime ideal $\mcP\subset V_F$.
\end{Thm}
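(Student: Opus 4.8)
The strategy is to reduce $W$, by passing to a suitable quotient, to a rank-one \emph{discrete} valuation ring whose completion is the valuation ring of a finite extension of $\K$ — this is where Lemma~\ref{finite extension complete DVRs} enters — and then to read off $\alpha$ from the corresponding residue field. Recall first that $W$ has rank $1$ or $2$; let $Q$ be the prime ideal of $W$ of height $\mathrm{rank}(W)-1$, so $Q=(0)$ when $W$ has rank $1$ and $Q$ is the height-one prime when $W$ has rank $2$. In the latter case $Q\cap K=(0)$, since $W\cap K=V$ has rank one, hence $W_{Q}$ is a valuation ring of $K(X)$ containing $K$ and therefore a DVR of the form $K[X]_{(q)}$ with $q\in K[X]$ irreducible, or $K[\frac{1}{X}]_{(\frac{1}{X})}$. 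The role of hypothesis (ii) is to force the rank-one valuation ring $W/Q$ to be discrete: if its value group were dense, then by density $M^{e}/Q$ would coincide with the whole maximal ideal $M/Q$ of $W/Q$, which strictly contains $\pi(W/Q)$, contradicting $\pi W=M^{e}$. Reducing (i) and (ii) modulo $Q$ then shows that $W/Q$ is a rank-one DVR lying over $V$, with residue field $W/M$ of degree $f$ over $V/P$ and with $\pi(W/Q)=(M/Q)^{e}$, i.e.\ ramification index $e$ over $V$.

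Now I complete. The DVR $\widehat{W/Q}$ contains $\widehat{V}=\V$, and by Lemma~\ref{finite extension complete DVRs} its fraction field $L$ — which is the completion of the residue field $\kappa(Q)=W_{Q}/QW_{Q}$ of $W_{Q}$ — is a finite extension of $\K$ of degree $ef$, with ramification index $e$ and residue degree $f$ over $\V$; here $\kappa(Q)=K(X)$ when $W$ has rank $1$, while $\kappa(Q)=K[X]/(q)$, a simple algebraic extension of $K$ (or $\kappa(Q)=K$ when $W_{Q}=K[\frac1X]_{(\frac1X)}$), when $W$ has rank $2$. Fixing a $\K$-embedding $L\hookrightarrow\abK$, let $\alpha\in\PL(\abK)$ be the image of the canonical generator of $\kappa(Q)$ over $K$ — the class of $X$, respectively the class of $X$ modulo $q$, respectively $\alpha=\infty$ when $\kappa(Q)=K$. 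Since $\kappa(Q)$ is dense in $L$ while $\K(\alpha)$ is already complete, $L=\K(\alpha)$; and since $\V$ is complete, $\V_{\alpha}$ is its unique extension to $\K(\alpha)$, so $\widehat{W/Q}=\V_{\alpha}$. In particular $\K(\alpha)/\K$ has ramification index $e$ and residue degree $f$.

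It remains to identify $W$ with $W_{\alpha}$. Since a rank-one valuation ring has no proper overring other than its fraction field, $W/Q=\widehat{W/Q}\cap\kappa(Q)=\V_{\alpha}\cap\kappa(Q)$; pulling this equality back along the natural map $W_{Q}\to\kappa(Q)\hookrightarrow\K(\alpha)$ — which is precisely ${\rm ev}_{\alpha}|_{W_{Q}}$ of \eqref{evaluation map}, an embedding $K(X)\hookrightarrow\K(\alpha)$ in the rank-one case — yields $W=\{\varphi\in K(X)\mid \varphi\text{ is defined at }\alpha\text{ and }\varphi(\alpha)\in\V_{\alpha}\}=W_{\alpha}$, the case $\alpha=\infty$ being reduced as in Proposition~\ref{Vpalpha} via the change of variable $X\mapsto 1/X$.

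For the final equivalences, evaluating the polynomial $X$ at $\alpha$ gives $\alpha$, so $X\in W=W_{\alpha}\Leftrightarrow\alpha\in\abV$ (with the convention $\infty\notin\abV$). Assume $\alpha\in\abV$. If $\alpha$ is algebraic over $K$ — the rank-two case — put $F=K(\alpha)$ and let $\mcP$ be the center of the DVR $W/Q$ on the integral closure $V_{F}$ of $V$ in $F$; then $V_{F,\mcP}=W/Q$ and $\alpha\in F$ with $v(\alpha)\geq 0$, so $\alpha\in V_{F,\mcP}$, whence $f(\alpha)\in V_{F,\mcP}\subseteq\abV$ for every $f\in\Int_{K}(V_{F,\mcP})$, i.e.\ $\Int_{K}(V_{F,\mcP})\subseteq W_{\alpha}=W$. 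If $\alpha$ is transcendental over $K$ — the rank-one case — use the density of the relative algebraic closure $\overline{K}$ of $K$ in $\abK$ together with Krasner's lemma to choose $\alpha'\in\overline{K}\cap\abV$ with $\K(\alpha')=\K(\alpha)$; taking $F=K(\alpha')$ and $\mcP$ the prime of $V_{F}$ determined by this embedding, one has $\widehat{V_{F,\mcP}}=\V_{\alpha'}=\V_{\alpha}$ and $\alpha\in\V_{\alpha}$, so by \eqref{continuity} $\Int_{K}(V_{F,\mcP})=\Int_{K}(\V_{\alpha})\subseteq\{f\in K[X]\mid f(\alpha)\in\V_{\alpha}\}\subseteq W_{\alpha}=W$. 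Conversely, if $W$ is a valuation overring of $\Int_{K}(V_{F,\mcP})$ for some finite $F/K$ and prime $\mcP\subset V_{F}$, then the identity polynomial $X$ lies in $\Int_{K}(V_{F,\mcP})\subseteq W$, so $\alpha=X(\alpha)\in\abV$. I expect the technical heart to be this last, transcendental, half: even granting the standard density of $\overline{K}$ in $\abK$, one still needs Krasner's lemma — with some extra care when the residue characteristic is positive and $\K(\alpha)/\K$ is inseparable — to realize the finite extension $\K(\alpha)$ of $\K$ as the completion of a localization of the integral closure of $V$ in a finite extension of $K$; by contrast the reductions of the first two paragraphs are routine once Lemma~\ref{finite extension complete DVRs} is available.
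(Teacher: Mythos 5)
Your proposal follows essentially the same route as the paper's proof: hypothesis ii) forces $M$ to be principal and $W/Q$ to be a rank-one DVR, completing $W/Q$ and invoking Lemma \ref{finite extension complete DVRs} identifies its fraction field with a finite extension $\K(\alpha)$ of $\K$ where $\alpha$ is the image of $X$ (or $\infty$), pulling $\V_{\alpha}$ back along ${\rm ev}_{\alpha}$ gives $W=W_{\alpha}$, and Krasner's lemma handles the transcendental case of the overring statement. Treating the two ranks uniformly through $Q$, and choosing an algebraic element $\alpha'$ close to $\alpha$ instead of an approximating polynomial, are only cosmetic variations on the paper's argument.

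There is, however, one wrongly justified step. In the rank-two case you assert $Q\cap K=(0)$ ``since $W\cap K=V$ has rank one''. That inference is invalid on its own: a rank-two valuation ring of $K(X)$ lying over a rank-one $V$ can have its height-one prime contract to $P$. For instance, compose the Gaussian valuation ring $V[X]_{P[X]}$ (residue field $k(x)$, $k=V/P$) with the $x$-adic valuation of $k(x)$: the resulting rank-two valuation ring $W'$ satisfies $W'\cap K=V$, yet its height-one prime is $P[X]V[X]_{P[X]}\cap W'$, which meets $V$ in $P$. What excludes this is precisely hypothesis ii), which you do not invoke at this point: since $M$ is principal, $M^{e+1}\subsetneq M^{e}=\pi W$, so $\pi\notin M^{e+1}\supseteq\bigcap_{n}M^{n}=Q$, whence $Q\cap V=(0)$; this is exactly how the paper argues, and the repair slots into your proof without affecting anything else (your example $W'$ of course violates ii), since there $\pi W'$ is not a power of the maximal ideal). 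A second, smaller imprecision: density of $\overline{K}$ in $\abK$ together with Krasner's lemma yields only $\K(\alpha)\subseteq\K(\alpha')$; to get the equality $\K(\alpha')=\K(\alpha)$ you must also control $[\K(\alpha'):\K]$, e.g.\ by taking $\alpha'$ a root of a monic degree-$ef$ polynomial over $K$ approximating the minimal polynomial of $\alpha$ over $\K$, which is what the paper does (and, as you note, some care is needed in the inseparable case). But equality is not actually needed for your conclusion: the containment already gives $\alpha\in\V_{\alpha'}$, hence $\Int_K(V_{F,\mcP})=\Int_K(\V_{\alpha'})\subseteq W_{\alpha}$, so this point is harmless.
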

In particular, note that $\alpha$ has degree $e\cdot f$ over $\K$ (\cite[Chapter 6, Theorem 1]{Rib}). Also, in the case $W=W_{\infty}$ we necessarily have $e=f=1$ (see also Proposition \ref{Vpalpha}).

The existence of such a valuation domain $W$ is guaranteed also by a more general theorem given by Kuhlmann \cite[Theorem 1.4]{Kuhlmann}, but in the present context we have an explicit description of such valuation domains, see Remark \ref{explicit representation}.
\begin{proof}
Because of condition ii), the ideal $M$ is not idempotent, so $M=\varphi W$, for some $\varphi\in W$. Moreover, the rank of $W$ is $1$ or $2$; we distinguish now the two cases.

Suppose first that $W$ has rank $1$, so that $W$ is a DVR of $K(X)$. We consider the completion $\W$ of $W$ with respect to the $M$-adic topology. It is well-known that $\W$  is a DVR with field of fractions $\widehat{K(X)}$, the completion of $K(X)$ with respect to the $M$-adic topology, and $\varphi$ is a uniformizer of $\W$. Since $W$ lies over $V$, it follows that $\K$ embeds into $\widehat{K(X)}$ and $\W$ lies over $\V$. The residue field of $\W$ is isomorphic to the residue field of $W$ (\cite[Chapt. VI, \S 5, n. 3, Proposition 5]{Bourb}) and $\varphi^e \W=\pi\W$, since $M\W$ is the maximal ideal of $\W$. In particular, the same assumptions i) and ii) above for $W$ hold for its completion $\W$, so the ramification index $e(\W|\V)$ is equal to $e$ and  the residue field degree $f(\W|\V)$ is equal to $f$.  By Lemma  \ref{finite extension complete DVRs}, $\K\subseteq\widehat{K(X)}$ is a finite extension of degree $ef$.   Therefore, we have a $K$-embedding $\Phi:K(X)\hookrightarrow\widehat{K(X)}$ such that $\alpha=\Phi(X)$ is algebraic over $\K$, so, without loss of generality, we may consider $\alpha$ as an element of $\abK$;  note that the embedding $\Phi$ is nothing else that the evaluation of $X$ at $\alpha$. It follows that $\K(\alpha)$ is a finite extension of $\K$, hence complete, and since via the $K$-embedding $\Phi$ we have the containments $K(X)\subset\K(\alpha)\subseteq\widehat{K(X)}$, it follows that $\K(\alpha)$ is equal to the completion $\widehat{K(X)}$ of $K(X)$. Note that $\widehat{W}$ is isomorphic to the local ring $\V_{\alpha}$ of $\K(\alpha)$, so via the embedding $\Phi$ we have: 
$$W=\{\psi\in K(X) \mid \Phi(\psi(X))=\psi(\alpha)\in \V_{\alpha}\}=W_{\alpha}$$
Since $W$ has rank $1$ it follows that $\alpha$ is transcendental over $K$, by Proposition \ref{Vpalpha}.

Suppose now that $W\subset K(X)$ is a discrete valuation domain of rank $2$. It is known that the height one prime ideal $Q$ of $W$ is equal to $\bigcap_{n\in\N}M^n$ (\cite[Theorem 17.3]{Gilm}). Let $\pi_{Q}:W\to W/Q$ be the canonical residue map. Since $Q\cap V$ cannot be equal to $P$  because of condition ii) (i.e., $\pi\notin M^{e+1}$), we have $Q\cap V=(0)$. Hence, the restriction of $\pi_Q$ to $V$ is the identity, so $V\subseteq W/Q$ and $K$ is contained in  the quotient field $F$ of the valuation domain $W/Q$; in particular, $W/Q$ lies over $V$. Moreover, $W/Q$ is DVR with maximal ideal $M/Q$, which is generated by the residue class of $\varphi$ modulo $Q$. The localization $W_Q$  is a DVR of $K(X)$ with maximal ideal $Q$ and the residue map $W_Q\to W_Q/Q$ coincides with $\pi_Q$ over $W$ (note that the two homomorphisms have the same kernel). Now, $X^{-1}\in Q\Leftrightarrow W_Q=K[X^{-1}]_{(X^{-1})}$,  and in this case the homomorphism $W_Q\to W_Q/Q\cong K=F$ is easily seen to be ${\rm ev}_{\infty}$, the evaluation of $X$ at $\infty$. If instead $X\in W_Q$, then $W_Q=K[X]_{(q)}$, for some irreducible polynomial $q\in K[X]$.  Therefore, the residue map $\pi_Q$ is equal to the restriction to $W$ of the evaluation map ${\rm ev}_{\alpha}:K[X]_{(q)}\to K[X]_{(q)}/Q=K(\alpha)$, where $\alpha$ is the residue class of $X$ modulo $Q$, and $K(\alpha)=F$ is a finite extension of $K$. In either case ($X^{-1}\in Q\Leftrightarrow\alpha=\infty$ or $X\in W_Q$), since $W/Q$ is a DVR of $F$ containing $V_F$, it follows that $W/Q$ is equal to $V_{F,\mathcal P}$, for some prime ideal $\mathcal P\subset V_F$ (if $\alpha=\infty$, then $V_{F,\mathcal{P}}=V$). We identify $\alpha\in F$ with its image in the $\mathcal{P}$-adic completion $\widehat{F}_{\mathcal{P}}=\K(\alpha)\subset\abK$ of $F$ (see \cite[Chapt. 4, L., p. 121]{Rib}); in this way $\alpha$ is uniquely associated to an element of $\PL(\abK)$, which is $\infty$ exactly in the case $X^{-1}\in Q$. Since $V_{F,\mathcal{P}}=\abV\cap F$ and $\alpha\in F$, we have:
$$W=\{\psi\in K(X) \mid \psi(\alpha)\in V_{F,\mathcal P}\}=W_{\alpha}.$$
In fact, the value at $\alpha$ of $\psi\in K(X)$ is in $V_{F,\mathcal P}=W/Q=\pi_{Q}(W)$ if and only if $\psi$ is in $W,$ since $\pi_Q=({\rm ev}_{\alpha})_{|W}$ and the homomorphisms ${\rm ev}_{\alpha}$ and $\pi_Q$ share the same kernel, namely the ideal $Q\subset W$. Note that the completion of $V_{F,\mathcal{P}}$ is isomorphic to $\V_{\alpha}$. Since $W/M\cong(W/Q)/(M/Q)$, which is the residue field of $W/Q=V_{F,\mathcal{P}}$,  the residue field degree of $\K(\alpha)$ over $\K$ is $f$. From the assumption $\pi W=M^e$, it follows that $\pi\cdot W/Q=(M/Q)^e$, so the ramification index of $\K(\alpha)/\K$ is $e$.

We prove now the last equivalences, whether $W=W_{\alpha}$ has rank $1$ or $2$ (note that in these cases $\alpha\not=\infty$). Suppose that $X\in W$. If $W_{\alpha}$ has rank $1$, then $\Phi(X)=\alpha\in\W=\V_{\alpha}$, so $\alpha$ is integral over $\V$. If $W_{\alpha}$ has rank $2$, then $\pi_Q(X)=\alpha\in W/Q=V_{F,\mcP}$, and so $\alpha$ is integral over $\V$ (in this case, $\alpha$ is not necessarily integral over $V$, which is the case exactly when there is just one prime ideal $\mcP$ in $V_F$ above $P$). 

Suppose that $\alpha$ is integral over $\V$. In case $W=W_{\alpha}$ has rank $2$, then $\alpha\in\abV\cap F=V_{F,\mathcal{P}}$, and so $\Int_K(V_{F,\mathcal{P}})\subset W_{\alpha}$. Suppose now that $W_{\alpha}$ has rank $1$ and let $\widehat q(X)$ be the minimal polynomial of $\alpha$ over $\K$, of degree $ef$. By Krasner's Lemma (\cite[Chapt. 5, G.]{Rib}), if $q\in K[X]$ is a monic polynomial of degree $ef$ which is $v$-adically sufficiently close to $\widehat q(X)$, then $q(X)$ is irreducible over $\K$, hence also over $K$. Let $F=K(\beta)$ be the number field of degree $ef$ generated by a root $\beta$ of $q(X)$. Note that there exists only one prime ideal $\mathcal{P}$ in $V_F$ above $P$ (precisely because $q(X)$ is irreducible over the completion $\K$, see \cite[Chapt. 6, B.]{Rib}); in particular, $V_F$ is a DVR and the $\mathcal P$-adic completion of $F$ is isomorphic to $\K(\alpha)=\K(\beta)$. Moreover, $e(\mathcal P|P)=e$ and $f(\mathcal P|P)=f$ (\cite[Chapt. VI, \S 5, n. 3, Proposition 5]{Bourb}). Let $W'_{\alpha}=\{\psi\in F(X) \mid \psi(\alpha)\in\V_{\alpha}\}$ be the valuation domain of $F(X)$ corresponding to $\alpha$. Since the $\mcP$-adic completion of $V_F$ is equal to $\V_{\alpha}$, by (\ref{continuity}) we have $\Int(V_F)=\Int_F(\V_{\alpha})$, and the latter ring is clearly contained in $W'_{\alpha}$, because by assumption $\alpha\in\abV\Leftrightarrow\alpha\in\V_{\alpha}$. Contracting everything down to $K(X)$ we get $\Int_K(V_F)\subset W_{\alpha}$. 
 
Finally, if $W_{\alpha}$ contains $\Int_K(V_{F,\mathcal{P}})\supset V[X]$, then $X$ is in $W_{\alpha}$. The proof of the last claim of the statement is now complete.
\end{proof}

\begin{Rem}\label{Kap}
Let $W_{\alpha}$ be a valuation domain of $K(X)$, with $\alpha\in\abK$. We have seen in the proof of Theorem \ref{description W} that the completion of $W_{\alpha}$ with respect to the $M_{\alpha}$-adic topology is isomorphic to $\V_{\alpha}$: in fact, if $\alpha$ is transcendental over $K$, this is clear. If $\alpha$ is algebraic over $K$, then the $M_{\alpha}$-adic completion of $W_{\alpha}$ is equal to the $(M_{\alpha}/Q_{\alpha})$-adic completion of $W_{\alpha}/Q_{\alpha}$, where $Q_{\alpha}$ is the height one prime ideal of $W_{\alpha}$, since $Q_{\alpha}=\bigcap_{n\in\N}M_{\alpha}^n$. It follows that if the rank of $W_{\alpha}$ is $1$, then $W_{\alpha}$ is immediate over $K$ if and only if $e=f=1$, thus $\alpha\in \K$, so $\alpha$ is the limit of a (pseudo-)Cauchy sequence $\{\alpha_k\}_{k\in\N}\subset K$ of transcendental type, according to the terminology used by Kaplansky in his paper (see \cite[Theorem 2]{Kaplansky}). Similarly, if the rank of $W_{\alpha}$ is $2$, then $W_{\alpha}/Q_{\alpha}$ is immediate over $K$ if and only if $e=f=1$, thus $\alpha\in \K$. In this latter case, the corresponding Cauchy sequence $\{\alpha_k\}_{k\in\N}$ is of algebraic type.
\end{Rem}

\vskip0.2cm
\begin{Rem}\label{MacLane valuations}
We show now how the valuation domains $W_{\alpha}\subset K(X)$, $\alpha\in\PL(\abK)$, are related with the work of MacLane on valuations of the rational function field $K(X)$  which extend a given DVR $V$ of $K$ (see \cite{MacLane,MacLaneDuke}). The class of valuation domains of Definition \ref{Def Vpalpha} are exactly the \emph{constant degree limit valuations} considered by MacLane in \cite[\S 7. p. 375]{MacLane} (we refer to that paper for all the unexplained terminology that follows). Such a valuation domain is obtained as a suitable limit of the so-called inductive commensurable valuations where the degree of the associated sequence of key polynomials is bounded. These kind of valuation domains can be of two types, finite limit valuations or infinite limit valuations (see \cite[\S 6 \& \S 7, pp. 372-377, \& Theorem 7.1]{MacLane} and \cite[Note, p. 108]{LopTart}). The first type of valuation domains are DVRs of $K(X)$ with residue field which is a finite extension of $V/P$ (\cite[Theorem 7.1 \& Theorem 14.1]{MacLane}), so by Theorem \ref{description W}  and Proposition \ref{Vpalpha} they correspond to the $W_{\alpha}$'s where $\alpha\in\abK$ is transcendental over $K$. The second type of valuation domain is treated by MacLane as a one-dimensional valuation domain with the value group extended by adding $\infty$ (they are also called \emph{pseudo-valuations} in \cite{NartCo}). In fact, as noted in \cite[Lemma 1.23 \& p. 109]{LopTart}, these last kind of limit valuations are $2$-dimensional discrete valuation domains. Moreover, the one-dimensional valuation overring of such a valuation domain $W$ is of the form $K[X]_{(q)}$, for some irreducible polynomial $q\in K[X]$ and the residue field of $W$ is a finite extension of the residue field $V/P$ (this can be verified directly or also by a suitable modification of the original proof by MacLane in the case of finite limit valuations, \cite[Theorem 14.1]{MacLane}). Therefore, these valuations $W$ are exactly those of the form $W_{\alpha}$, for $\alpha\in \abK$ which is algebraic over $K$. 

Conversely, suppose we have a valuation domain $W_{\alpha}$, $\alpha\in\abK$. By \cite[Theorem 8.1]{MacLane} $W_{\alpha}$ can be realized as a limit valuation since its residue field is finite algebraic over $V/P$ and the residue field of an inductive commensurable valuation is a transcendental extension of $V/P$ (\cite[Theorem 12.1]{MacLane}). Moreover, by \cite[Theorem 14.1]{MacLane} the degree of the key polynomials is necessarily bounded, so $W_{\alpha}$ can be realized as a constant degree limit valuation. 
\end{Rem}
\vskip0.2cm
\begin{Rem}\label{Unitary val dom LopWer}
In \cite{LopWer} Loper and Werner construct   Pr\"ufer domains contained between $\Z[X]$ and $\Q[X]$ by considering arbitrary intersections of suitable valuation domains of $\Q(X)$, in order to obtain examples of Pr\"ufer domains properly  contained in $\Int(\Z)=\{f\in\Q[X] \mid f(\Z)\subseteq\Z\}$, the classical ring of integer-valued polynomials over $\Z$ (see \cite[Construction 2.3 \& Corollary 2.12]{LopWer}). The valuation domains used in that construction are exactly those introduced in Definition \ref{Def Vpalpha} and we show now that the Pr\"ufer domains of \cite{LopWer} can be represented as rings of integer-valued polynomials. Indeed, for each prime $p\in\Z$, the valuation domains $V_i$ of \cite[Construction 2.3]{LopWer} have finite residue field of cardinality bounded by a prescribed positive integer $f_p$ and maximal ideal $M_i$ such that $pV_i=M_i^{e_i}$, for some $e_i$ bounded by a prescribed positive integer $e_p$. Hence, by Theorem \ref{description W}, each of these valuation domains is equal to $W_{p,\alpha}=\{\varphi\in\Q(X) \mid \varphi(\alpha)\in\overline{\Z_p}\}$, for some $\alpha$ in the absolute integral closure $\overline{\Z_p}$ of the ring $\Z_p$ of $p$-adic integers, whose degree over $\Q_p$ is bounded by $n_p=e_p\cdot f_p$. Let $\Omega_p\subset\overline{\Z_p}$ be the set of all such elements $\alpha$. Then we have
\begin{equation*}
D_p\doteqdot \bigcap_{\alpha\in\Omega_p}W_{p,\alpha}\cap\Q[X]=\Int_{\Q}(\Omega_p,\overline{\Z_p})=\{f\in\Q[X] \mid f(\Omega_p)\subseteq\overline{\Z_p}\}
\end{equation*}
which is the ring of polynomials with rational coefficients which are integer-valued over the set $\Omega_p$ with respect to $\Z_p$. The ring $D$ obtained in \cite[Construction 2.3]{LopWer} as the intersection of all the rings $\{D_p \mid p\in\Z \textnormal{ prime}\}$, is thus represented as an intersection of such rings of integer-valued polynomials over different subsets of integral elements over $\Z_p$ of bounded degree, as $p$ ranges through the primes of $\Z$. Following the notation of \cite{ChabPer}, we can give a more concise representation of $D$. Let
$$\underline{\Omega}\doteqdot\prod_{p\in\mathbb{P}}\Omega_p\subseteq\prod_{p\in\mathbb{P}}\overline{\Z_p}\doteqdot\overline{\widehat{\Z}}$$
then
\begin{equation}\label{2 representation}
D=\Int_{\Q}(\underline{\Omega},\overline{\widehat{\Z}})=\{f\in\Q[X] \mid f(\underline{\alpha})\in \overline{\widehat{\Z}},\;\;\forall\underline{\alpha}\in\underline{\Omega}\}
\end{equation}
where, for $f\in\Q[X]$ and $\underline{\alpha}=(\alpha_p)_p\in \overline{\widehat{\Z}}$, we set $f(\underline{\alpha})\doteqdot(f(\alpha_p))_p\in\prod_{p\in\mathbb{P}}\overline{\Q_p}$.
\end{Rem}
\vskip0.4cm
\section{An ultrametric space of valuation domains of $K(X)$}

In this section, we recover our initial assumptions, thus $V$ is a valuation domain of rank $1$.  Throughout this section, we denote by $\mathfrak{W}$ the set of all valuation domains $W_{\alpha}$ of $K(X)$, as $\alpha$ ranges in $\abK$. For each $\alpha\in\abK$, we also set:
$$\mcW_{\alpha}\doteqdot\{\psi\in\K(X) \mid \psi(\alpha)\in\abV\}$$
By Proposition \ref{Vpalpha}, $\mcW_{\alpha}$ is a valuation domain of $\K(X)$ of rank $2$, since $\alpha$ is algebraic over $\K$ by definition. Clearly, $\mcW_{\alpha}\cap K(X)=W_{\alpha}$, and  $\mcW_{\alpha}=W_{\alpha}$ if $K$ is $v$-adically complete. Moreover, note that $\mcW_{\alpha}$ is immediate over $K(X)$ if and only if $\alpha$ is algebraic over $K$ (if $\alpha$ is transcendental over $K$, then $W_{\alpha}$ has rank $1$, by Proposition \ref{Vpalpha}). We denote by $\widehat{\mathfrak{W}}$ the set of all valuation domains $\mcW_{\alpha}$ of $\K(X)$, for $\alpha\in\abK$. We also denote by  $\Pirr=\Pirr(\K)$ the set of the monic irreducible polynomials over $\K$. Given $\alpha\in\abK$, we denote by $p_{\alpha}\in\Pirr$ the minimal polynomial of $\alpha$ over $\K$. Given $p\in\Pirr$, we let $\Omega_p\subset\abK$ be the set of roots of $p(X)$. Let  $|\;\;|$ be  the absolute value $v$ induces on $\abK$. In general, given an ultrametric space $(S,|\;|)$, $s\in S$ and $r\in\R$, $r>0$, we let $B(s,r)=\{s'\in S \mid |s-s'|<r\}$ be the open ball of center $s$ and radius $r$.

In this section we show that there is a one-to-one correspondence from $\mathfrak{W}$  to the set of orbits of $\abK$ under the action of the absolute Galois group $G_{\K}=\Gal(\abK/\K)$, that is, given $\alpha,\alpha'\in\abK$,  $W_{\alpha}=W_{\alpha'}$ if and only if $\alpha$ and $\alpha'$ are roots of the same irreducible polynomial over $\K$. Equivalently, the set $\mathfrak{W}$ is in bijection with the set $\Pirr$. Similarly, $\widehat{\mathfrak{W}}$ is in bijection with $\Pirr$. We introduce now in these spaces suitable natural topologies. We endow $\mathfrak{W}$ with the Zariski topology, that is, the topology which has as an open basis the sets $E(\varphi_1,\ldots,\varphi_n)=\{W_{\alpha} \mid \varphi_i\in W_{\alpha}, \forall i=1,\ldots,n\}$, for $\varphi_i\in K(X)$, $i=1,\ldots,n$, $n\in\N$ (see \cite[Chapt. VI, \S 17]{ZS2}). The set $\widehat{\mathfrak{W}}$ is endowed with a similar topology: for $\psi_i\in \K(X)$, $i=1,\ldots,n$, we set $\widehat E(\psi_1,\ldots,\psi_n)=\{\mcW_{\alpha} \mid \psi_i\in \mcW_{\alpha}, \forall i=1,\ldots,n\}$. We endow the set $\Pirr$ with the following ultrametric distance, introduced by Krasner (see \cite{Krasner}): for $p,q\in\Pirr$, we set
$$\Delta(p,q)\doteqdot\min\{|\alpha-\beta| \mid \alpha\in\Omega_p,\beta\in\Omega_q\}$$
In other words, the function $\Delta(p,q)$ measures the smallest distance between the roots of $p(X)$ and the roots of $q(X)$. As Krasner points out, for each $\alpha\in\Omega_p$ there exists $\beta\in\Omega_q$ such that $|\alpha-\beta|=\Delta(p,q)$. The other main result of this section is that with the topologies we have introduced $\Pirr$, $\mathfrak{W}$ and $\widehat{\mathfrak{W}}$ are homeomorphic.

We recall the following formula due to Krasner (see \cite[p. 150-151]{Krasner}): given $p\in\Pirr$ of degree $n$ with set of roots $\Omega_p=\{\alpha=\alpha_1,\ldots,\alpha_n\}\subset\abK$ and $\beta\in\abK$ with minimal polynomial $q\in\Pirr$, we have
\begin{equation}\label{Krasner}
|p(\beta)|=\prod_{i=1}^n\max\{\Delta(p,q),|\alpha-\alpha_i|\}
\end{equation}
 Recall that, for $\varphi\in K(X)$ and $p\in\Pirr$, the valuation of  $\varphi(\alpha)$ in $\abK$, for $\alpha\in\Omega_p$, does not depend on the choice of $\alpha$ in $\Omega_p$. In fact, for $\alpha,\alpha'\in\Omega_p$, $\alpha\not=\alpha'$,
the elements $\varphi(\alpha),\varphi(\alpha')$ are conjugated over $\K$, i.e., $\varphi(\alpha')=\sigma(\varphi(\alpha))$, for some $\sigma\in G_{\K}$; then since $\K$ is complete of rank one, $v$ and $v\circ \sigma$ coincide (\cite[A., p. 127]{Rib}), thus $v(\varphi(\alpha))=v(\sigma(\varphi(\alpha)))=v(\varphi(\alpha'))$. 

The right hand side of (\ref{Krasner}), which we denote by $M_{p}(\Delta(p,q))$, is a strictly increasing function of $\Delta(p,q)$,  that is, $\Delta(p,q)<\Delta(p,q')\Leftrightarrow M_p(\Delta(p,q))<M_p(\Delta(p,q'))$.  In particular, formula (\ref{Krasner}) shows that $|p(\beta)|$ depends only on $p(X)$ and $\Delta(p,p_{\beta})$. Therefore, $|p(\beta)|=M_p(\Delta(p,p_{\beta}))=M_p(\Delta(p,q))$ for each $q\in\Pirr$ such that $\Delta(p,p_{\beta})=\Delta(p,q)$. 
More generally, the real-valued function
\begin{equation}\label{Mpr}
M_p(r)\doteqdot\prod_{i=1}^n\max\{r,|\alpha-\alpha_i|\}
\end{equation}
is a strictly increasing function of the real variable $r$.  It follows immediately that for each $r\in\mathbb{R}^+$ we have $M_p(r)=|p(\beta)|$, for each $\beta\in \abK$ such that $\Delta(p,p_{\beta})=r$.

\begin{Rem}\label{continuity rat fun}
We will use the following well-known fact: a rational function $\varphi\in K(X)$ is a continuous function over $\abK$ with respect to the $v$-adic topology on its domain of definition, that is, if $\alpha\in\abK$ is such that $\varphi(\alpha)\not=\infty$, then, for each $\varepsilon\in\R$, $\varepsilon>0$, there exists $\delta\in\R$, $\delta>0$, such that for all $\alpha'\in B(\alpha,\delta)$ we have $\varphi(\alpha')\in B(\varphi(\alpha),\varepsilon)$. In particular, if $\varphi$ is integral at $\alpha$ (i.e., $\varphi(\alpha)\in\abV$) then for $\varepsilon<1$, the corresponding $\delta$ is such that $\varphi$ is integral over $B(\alpha,\delta)$, that is, $\varphi\in W_{\alpha'}$, for all $\alpha'\in B(\alpha,\delta)$. Actually, we note that, since we are considering rational functions over $K$, if $\varphi$ is integral at $\alpha'\in\abK$ and $p(X)=p_{\alpha'}(X)\in\Pirr$, then $\varphi$ is integral over the set $\Omega_{p}$, so, it is sufficient that an element of $\Omega_{p}$ is in $B(\alpha,\delta)$ in order for $\varphi$ to be integral at $\alpha'$; equivalently, for all $p_{\alpha'}\in B(p_{\alpha},\delta)=\{p\in\Pirr \mid \Delta(p,p_{\alpha})<\delta\}$, $\varphi\in W_{\alpha'}$.
\end{Rem}
We will also consider the sets formed by the contraction to $K[X]$ and to $\K[X]$ of the valuation domains of $\mathfrak{W}$ and $\widehat{\mathfrak{W}}$, respectively:
\begin{equation*}
\mathfrak{W}_{K[X]}=\{W_{\alpha}\cap K[X] \mid \alpha\in\abK\},\;\;
\widehat{\mathfrak{W}}_{\K[X]}=\{\mcW_{\alpha}\cap \K[X] \mid \alpha\in\abK\}
\end{equation*}
These contractions were first considered by MacLane in \cite[p. 382]{MacLane}, where they are called \emph{value rings}; see also \cite{LopTart} for a deeper study of their properties. Note also that we have the equality $W_{\alpha}\cap K[X]=\{f\in K[X] \mid f(\alpha)\in\abV\}$, where the last ring is a ring of integer-valued polynomials over the finite set $\{\alpha\}$, denoted by $\Int_K(\{\alpha\},\abV)$ in \cite{PerFinite}. The set $\mathfrak{W}_{K[X]}$ becomes a topological space when it is endowed with the natural Zariski topology, where a basis is given by $E_{K[X]}(f_1,\ldots,f_n)=\{W_{\alpha}\cap K[X] \mid f_i\in W_{\alpha}\cap K[X],i=1,\ldots,n\}$, where $f_1,\ldots,f_n$ are elements of $K[X]$ and $n\in\N$. Similar topology  is given to $\widehat{\mathfrak{W}}_{\K[X]}$. We will show in our last main theorem that also $\mathfrak{W}_{K[X]}$ and $\widehat{\mathfrak{W}}_{\K[X]}$ are homeomorphic to $\Pirr$, $\mathfrak{W}$ and $\widehat{\mathfrak{W}}$. We show first in the next theorem that these sets are in bijection with each other.
\vskip0.5cm
\begin{Thm}\label{equivalent conditions}
Let $\alpha_1,\alpha_2\in\abK$. Then the following conditions are equivalent:
\begin{itemize}
\item[i)] $W_{\alpha_1}=W_{\alpha_2}$.
\item[ii)] $W_{\alpha_1}\cap K[X]=W_{\alpha_2}\cap K[X]$.
\item[iii)] $\mcW_{\alpha_1}\cap \K[X]=\mcW_{\alpha_2}\cap \K[X]$.
\item[iv)] $\alpha_1,\alpha_2$ are conjugated over $\K$.
\item[v)] $\mcW_{\alpha_1}=\mcW_{\alpha_2}$.
\end{itemize}
In particular, the sets $\mathfrak{W}$, $\widehat{\mathfrak{W}}$, $\mathfrak{W}_{K[X]}$, $\widehat{\mathfrak{W}}_{\K[X]}$ and $\Pirr$ are in bijective correspondence.
\end{Thm}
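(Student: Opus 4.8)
The plan is to establish the cycle of implications $\text{iv)}\Rightarrow\text{v)}\Rightarrow\text{i)}\Rightarrow\text{ii)}\Rightarrow\text{iv)}$, and similarly $\text{v)}\Rightarrow\text{iii)}\Rightarrow\text{iv)}$, which together give the equivalence of all five conditions; the final sentence about bijections then follows formally, since each of $\mathfrak{W}$, $\widehat{\mathfrak{W}}$, $\mathfrak{W}_{K[X]}$, $\widehat{\mathfrak{W}}_{\K[X]}$ is by definition the image of $\abK$ under the map $\alpha\mapsto W_\alpha$ (resp. $\mcW_\alpha$, etc.), and the theorem says all these maps have the same fibers, namely the Galois orbits, which by Galois theory are exactly the root-sets $\Omega_p$ of the monic irreducible polynomials $p\in\Pirr$.

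First I would prove $\text{iv)}\Rightarrow\text{v)}$: if $\alpha_1,\alpha_2$ are conjugate over $\K$, pick $\sigma\in G_{\K}$ with $\sigma(\alpha_1)=\alpha_2$. For $\psi\in\K(X)$ one has $\psi(\alpha_2)=\sigma(\psi(\alpha_1))$ (the coefficients of $\psi$ lie in $\K$, so $\sigma$ fixes them), and since $\K$ is complete of rank one, $v$ and $v\circ\sigma$ agree on $\abK$ (as already used in the excerpt, citing Ribenboim); hence $v(\psi(\alpha_2))=v(\psi(\alpha_1))$, so $\psi\in\mcW_{\alpha_1}\Leftrightarrow\psi\in\mcW_{\alpha_2}$. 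The implications $\text{v)}\Rightarrow\text{i)}$ and $\text{v)}\Rightarrow\text{iii)}$ are trivial contractions ($W_{\alpha}=\mcW_{\alpha}\cap K(X)$, and $\mcW_{\alpha}\cap\K[X]$ is literally a contraction of $\mcW_{\alpha}$), and $\text{i)}\Rightarrow\text{ii)}$ is contraction to $K[X]$. So the whole content is in the two "hard direction" implications $\text{ii)}\Rightarrow\text{iv)}$ and $\text{iii)}\Rightarrow\text{iv)}$, i.e. recovering the Galois orbit from the ring of integer-valued polynomials.

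For $\text{ii)}\Rightarrow\text{iv)}$, suppose $\alpha_1,\alpha_2$ are \emph{not} conjugate over $\K$, with minimal polynomials $p=p_{\alpha_1}$ and $q=p_{\alpha_2}$, so $p\neq q$; I need a polynomial in $K[X]$ lying in exactly one of $W_{\alpha_1}\cap K[X]$, $W_{\alpha_2}\cap K[X]$. The natural candidate is a suitable scaling $c^{-1}\widehat q(X)$ of a polynomial over $K$ approximating $q$ — but to stay inside $K[X]$ one should instead use that $p\neq q$ implies $\Delta(p,q)>0$ by Krasner's bound, while $\Delta(p,p)$ is effectively "infinite" in the sense that $|p(\alpha_1)|=0$. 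Concretely, using Krasner's formula $|p(\beta)|=M_p(\Delta(p,p_\beta))$ and the strict monotonicity of $M_p$, we get $|p(\alpha_2)|=M_p(\Delta(p,q))<\infty$ is a fixed positive-or-zero real, whereas $p(\alpha_1)=0$. If $p\in K[X]$ (which happens iff $\alpha_1$ is "defined over $K$" up to conjugacy — in general $p\in\K[X]$, not $K[X]$), one scales by a uniformizer power to separate; in the general case one must first replace $p$ by a monic $v$-adically close polynomial $\tilde p\in K[X]$ of the same degree, which by Krasner's Lemma has a root conjugate to $\alpha_1$ over $\K$ and satisfies $|\tilde p(\alpha_1)|$ very small but $|\tilde p(\alpha_2)|=M_{\tilde p}(\Delta(\tilde p,q))$ bounded below (since $\Delta(\tilde p,q)=\Delta(p,q)$ if the approximation is close enough), and then scale: for an appropriate $\pi^m$, $\pi^{-m}\tilde p\in W_{\alpha_1}$ but $\pi^{-m}\tilde p\notin W_{\alpha_2}$, or vice versa. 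This separating-polynomial construction, and checking the scaling exponent can be chosen in $K$ (using that $V$ is a valuation domain of rank one, so its value group is archimedean and $|\pi^m\tilde p(\alpha_i)|$ sweeps through all sufficiently small/large values), is the main obstacle. The implication $\text{iii)}\Rightarrow\text{iv)}$ is then easier: contracting to $\K[X]$ we may work with the honest minimal polynomials $p,q\in\Pirr\subset\K[X]$ themselves; if $p\neq q$ then $p\in\mcW_{\alpha_1}\cap\K[X]$ but some scalar multiple $\widehat\pi^{-m}p\notin\mcW_{\alpha_2}\cap\K[X]$ for suitable $m$, since $|p(\alpha_2)|=M_p(\Delta(p,q))$ is a fixed positive real while $|p(\alpha_1)|=0$, so the two value-rings differ. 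Finally, assembling: conditions i)–v) are equivalent, the common fiber of $\alpha\mapsto W_\alpha$ over any point is a single Galois orbit $\Omega_p$, $p\in\Pirr$, hence $\alpha\mapsto p_\alpha$ induces bijections $\mathfrak{W}\leftrightarrow\Pirr\leftrightarrow\widehat{\mathfrak{W}}\leftrightarrow\mathfrak{W}_{K[X]}\leftrightarrow\widehat{\mathfrak{W}}_{\K[X]}$.
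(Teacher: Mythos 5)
Your proposal is correct, but it decomposes the theorem differently from the paper. The paper proves the single cycle i)$\Rightarrow$ii)$\Rightarrow$iii)$\Rightarrow$iv)$\Rightarrow$v)$\Rightarrow$i), and its only substantial step is ii)$\Rightarrow$iii): an arbitrary $f\in\mcW_{\alpha_1}\cap\K[X]$ is replaced by a $g\in K[X]$ whose coefficients are $v$-adically so close to those of $f$ that $v(f(\alpha_j))\geq 0$ if and only if $v(g(\alpha_j))\geq 0$ for $j=1,2$; once one may work inside $\K[X]$, the step iii)$\Rightarrow$iv) is painless because the genuine minimal polynomial $p_{\alpha_1}$ lies in $\K[X]$ and dividing it by $\omega^{n}$, $\omega\in P\setminus\{0\}$, forces $p_{\alpha_1}(\alpha_2)=0$. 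You instead put all the weight on ii)$\Rightarrow$iv), manufacturing a separating element of $K[X]$ by approximating $p_{\alpha_1}$ itself by a monic $\tilde p\in K[X]$ and rescaling. Both routes rest on exactly the same two ingredients (density of $K$ in $\K$, and cofinality in $\R$ of the values $m\,v(\omega)$, which is where rank one enters), so neither is more general; the paper's version keeps the separation step trivial at the cost of transferring arbitrary polynomials from $\K[X]$, while yours isolates one separating polynomial but then needs a lower bound on $|\tilde p(\alpha_2)|$ that is uniform in the quality of the approximation.

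That uniform bound is easy to arrange, but fix the quantifier order: set $c=|p_{\alpha_1}(\alpha_2)|>0$ (nonzero exactly because iv) fails), choose $m$ with $|\omega|^{m}<c$, and only then choose $\tilde p\in K[X]$ monic, of the same degree, so close to $p_{\alpha_1}$ that $|\tilde p(\alpha_1)|\leq|\omega|^{m}$ and $|\tilde p(\alpha_2)-p_{\alpha_1}(\alpha_2)|<c$, whence $|\tilde p(\alpha_2)|=c$ by the ultrametric inequality; then $\omega^{-m}\tilde p$ lies in $W_{\alpha_1}\cap K[X]$ but not in $W_{\alpha_2}$. Done this way, Krasner's formula, continuity of roots and Krasner's Lemma are not needed at all in this step --- which is also safer, since $p_{\alpha_1}$ may be inseparable and your appeal to Krasner's Lemma (to produce a root of $\tilde p$ conjugate to $\alpha_1$) would then be unjustified; fortunately that conclusion is never actually used in your argument. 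Two further points of hygiene: in this section $V$ is only assumed to be of rank one, so there is no uniformizer $\pi$ (nor one in $\K$); as your own remark about the archimedean value group indicates, powers of any nonzero $\omega\in P$ do the job in both ii)$\Rightarrow$iv) and iii)$\Rightarrow$iv), exactly as in the paper's use of $\omega^{n}$. Your iv)$\Rightarrow$v), the trivial contraction implications, and the identification of the common fibers of $\alpha\mapsto W_{\alpha}$ with the root sets $\Omega_p$, $p\in\Pirr$, coincide with the paper's.
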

\begin{proof} 
$i)\Rightarrow ii)$. Clear.

 $ii)\Rightarrow iii)$. Suppose $W_{\alpha_1}\cap K[X]=W_{\alpha_2}\cap K[X]$ and let $f\in \mcW_{\alpha_1}\cap \K[X]$. Let us write $f(X)=\sum_{i=0}^h \widehat{a}_i X^i$, with $\widehat{a}_i\in\K$, $0\leq i\leq h$ and let us consider $a_i\in K$, $0\leq i\leq h$, such that $v((\widehat{a}_i-a_i)\alpha_j^i)=v(\widehat{a}_i-a_i)+iv(\alpha_j)>v(f(\alpha_1))$, for all $0\leq i\leq h$ and $j=1,2$. If $g(X)=\sum_{i=0}^h a_i X^i\in K[X]$, then $v(g(\alpha_1))\geq \min\{v((g-f)(\alpha_1)),v(f(\alpha_1))\}=v(f(\alpha_1))\geq 0$ (note that $v((g-f)(\alpha_1))\geq\min\{v(\widehat{a}_i-a_i)+iv(\alpha_1) | i\in\{0,\ldots,h\}\}$). Thus, $g\in W_{\alpha_1}\cap K[X]=W_{\alpha_2}\cap K[X]$ and $v(f(\alpha_2))\geq \min\{v((f-g)(\alpha_2)),v(g(\alpha_2))\}\geq 0$ (note that $v((f-g)(\alpha_2))\geq\min\{v(\widehat{a}_i-a_i)+iv(\alpha_2) | i\in\{0,\ldots,h\}\}>v(f(\alpha_1))$). Hence, $f\in\mcW_{\alpha_2}\cap \K[X]$. Since $f(X)$ was arbitrary, this shows that $\mcW_{\alpha_1}\cap \K[X]\subseteq\mcW_{\alpha_2}\cap \K[X]$ and the other inclusion is proved in the same way.

$iii)\Rightarrow iv)$. Suppose $\mcW_{\alpha_1}\cap \K[X]=\mcW_{\alpha_2}\cap \K[X]$ and let $p=p_{\alpha_1}\in\Pirr$. Let us fix $\omega\in P$, $\omega\not=0$. If $p(\alpha_2)\not=0$, then there exists $n\in\N$ such that $\frac{p(\alpha_2)}{\omega^n}\notin\abV$, which is a contradiction, since for every $n\in\N$ we have $\frac{p(X)}{\omega^n}\in \mcW_{\alpha_1}\cap \K[X]=\mcW_{\alpha_2}\cap \K[X]$. Therefore $p(\alpha_2)=0$, so that $\alpha_1,\alpha_2$ are conjugated over $\K$.

$iv)\Rightarrow v)$. Suppose there exists $\sigma\in G_{\K}$ such that $\sigma(\alpha_1)=\alpha_2$. Given $f\in \mcW_{\alpha_1}$, $\sigma(f)=f\in \mcW_{\alpha_2}$, so $\mcW_{\alpha_1}\subseteq \mcW_{\alpha_2}$. The other inclusion is proved symmetrically, so $\mcW_{\alpha_1}= \mcW_{\alpha_2}$.

$v)\Rightarrow i)$. Since $\mcW_{\alpha_i}\cap K(X)=W_{\alpha_i}$, $i=1,2$, the claim follows immediately.

The last statement is now clear.
\end{proof}
\vskip0.3cm
\begin{Prop}\label{equivalent subbasis}
Let $\alpha\in\abK$, $r>0$ and $\omega\in P$, $\omega\not=0$. Then there exist $q\in K[X]$ and $n\in\N$ such that $\frac{q(X)}{\omega^n}$ is integral at $\alpha$ and for all $W_{\alpha'}\in E(\frac{q(X)}{\omega^n})$, we have $p_{\alpha'}\in B(p_{\alpha},r)$. In particular, the family $\{E(\frac{q(X)}{\omega^n}) \mid q\in K[X], n\in\N\}$ is  a subbasis for the Zariski topology on $\mathfrak{W}$. 
\end{Prop}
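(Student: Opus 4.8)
The plan is to prove first the existence of $q$ and $n$ with the stated properties, and then to read off the subbasis assertion from it together with Remark \ref{continuity rat fun}. The mechanism behind the construction is Krasner's formula (\ref{Krasner}): it converts the requirement ``$q/\omega^n$ is integral at $\beta$'', that is $|q(\beta)|\leq|\omega|^n$, into an upper bound on $\Delta(q,p_\beta)$ as soon as $q$ is irreducible over $\K$; approximating $p_\alpha$ by such a $q$ and using that $\Delta$ is an ultrametric then upgrades this into a bound on $\Delta(p_\alpha,p_\beta)$, which is exactly the conclusion $p_\beta\in B(p_\alpha,r)$.

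For the construction, put $d=\deg p_\alpha=[\K(\alpha):\K]$ and let $\alpha=\alpha_1,\ldots,\alpha_d$ be the roots of $p_\alpha$ in $\abK$. Since $\omega\in P$ we have $|\omega|<1$, so we may fix once and for all an integer $n$ with $|\omega|^n<r^d$. As $K$ is dense in $\K$ and $p_\alpha(\alpha)=0$, the quantity $|q(\alpha)|=|(q-p_\alpha)(\alpha)|$ can be made arbitrarily small by choosing $q\in K[X]$ monic of degree $d$ with coefficients $v$-adically close to those of $p_\alpha$; we take $q$ close enough that (a) $q$ is irreducible over $\K$ (Krasner's lemma, as used in the proof of Theorem \ref{description W}, where the required closeness depends only on $p_\alpha$) and (b) $|q(\alpha)|<|\omega|^n$. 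By (b), $q/\omega^n$ is integral at $\alpha$; moreover, writing $\gamma_1,\ldots,\gamma_d$ for the roots of $q$, from $|q(\alpha)|=\prod_{i=1}^d|\alpha-\gamma_i|$ we get $\Delta(p_\alpha,q)\leq\min_i|\alpha-\gamma_i|\leq|q(\alpha)|^{1/d}<|\omega|^{n/d}<r$. Now let $\beta\in\abK$ be arbitrary with $q/\omega^n$ integral at $\beta$, i.e.\ $|q(\beta)|\leq|\omega|^n$. Applying (\ref{Krasner}) with $q\in\Pirr$ playing the role of ``$p$'' and $p_\beta$ that of ``$q$'', and noting that each factor of $M_q(\cdot)$ in (\ref{Mpr}) is at least $\Delta(q,p_\beta)$, we obtain $|q(\beta)|=M_q(\Delta(q,p_\beta))\geq\Delta(q,p_\beta)^{\,d}$; hence $\Delta(q,p_\beta)^{\,d}\leq|\omega|^n<r^d$, so $\Delta(q,p_\beta)<r$. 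Since $\Delta$ is an ultrametric and $\Delta(p_\alpha,q)<r$, we conclude $\Delta(p_\alpha,p_\beta)\leq\max\{\Delta(p_\alpha,q),\Delta(q,p_\beta)\}<r$, i.e.\ $p_\beta\in B(p_\alpha,r)$. This proves the existence statement: $q/\omega^n$ is integral at $\alpha$, and $W_{\alpha'}\in E(q/\omega^n)$ forces $p_{\alpha'}\in B(p_\alpha,r)$.

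For the subbasis claim, each $E(q/\omega^n)$ is Zariski-open, being of the form $E(\varphi)$ with $\varphi=q/\omega^n\in K(X)$, so every finite intersection of members of the family is open. Conversely, let $U\subseteq\mathfrak{W}$ be Zariski-open and $W_{\alpha}\in U$. Since the sets $E(\varphi_1,\ldots,\varphi_m)$, $\varphi_j\in K(X)$, form a basis, there are $\varphi_1,\ldots,\varphi_m\in K(X)$ with $W_{\alpha}\in\bigcap_{j=1}^m E(\varphi_j)\subseteq U$; in particular each $\varphi_j$ is integral at $\alpha$, so by Remark \ref{continuity rat fun} there is $\delta_j>0$ such that $W_{\alpha'}\in E(\varphi_j)$ whenever $\Delta(p_{\alpha'},p_\alpha)<\delta_j$. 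Put $\delta=\min_j\delta_j>0$ and apply the existence statement just proved with $r=\delta$: it produces $q\in K[X]$ and $n\in\N$ with $W_{\alpha}\in E(q/\omega^n)$ and $p_{\alpha'}\in B(p_\alpha,\delta)$ for every $W_{\alpha'}\in E(q/\omega^n)$, whence $W_{\alpha}\in E(q/\omega^n)\subseteq\bigcap_{j=1}^m E(\varphi_j)\subseteq U$. Thus every open set is a union of members of the family $\{E(q/\omega^n)\mid q\in K[X],\,n\in\N\}$, so this family is in fact a basis for the Zariski topology on $\mathfrak{W}$, and in particular a subbasis.

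The step I expect to require the most care is the simultaneous calibration of $q$: it has to be $v$-adically close enough to $p_\alpha$ to be irreducible over $\K$, close enough that $\Delta(p_\alpha,q)<r$, and close enough that $q/\omega^n$ is integral at $\alpha$ --- all without the choice of $n$ being permitted to depend on $q$. The cheap bound $M_q(r)\geq r^d$ (each factor in (\ref{Mpr}) is at least $r$) is exactly what decouples these constraints: it lets $n$ be fixed with $|\omega|^n<r^d$ before $q$ is chosen, after which the remaining requirements on $q$ all reduce to ``$q$ is sufficiently close to $p_\alpha$'', which is achievable because $K$ is dense in $\K$.
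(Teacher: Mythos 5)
Your argument is correct, and in fact a bit leaner than the paper's, as long as $p_\alpha$ is separable over $\K$ (so always in characteristic zero): the crude bounds $\min_i|\alpha-\gamma_i|\le|q(\alpha)|^{1/d}$ and $M_q(\rho)\ge\rho^{\,d}$ let you fix $n$ by $|\omega|^n<r^d$ before choosing $q$, whereas the paper matches each root of $q$ with a nearby root of $p_\alpha$, deduces $M_q=M_{p_\alpha}$, and chooses $n$ with $|\omega^n|<M_{p_\alpha}(r)$. In that range the two proofs follow the same scheme (approximate $p_\alpha$ by $q\in K[X]$ via density of $K$ in $\K$, get irreducibility from Krasner, apply (\ref{Krasner}), finish with the strong triangle inequality for $\Delta$), and your treatment of the subbasis claim, which actually shows the family is a basis, is fine.

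The gap is the inseparable case, which does occur in this generality since $K$ is an arbitrary rank-one valued field: if $K$ has characteristic $l>0$ and $\alpha$ is inseparable over $\K$, Krasner's lemma does not apply to $\alpha$ (it requires a separable element), so your step (a) --- that every monic $q\in K[X]$ of degree $d$ sufficiently close to $p_\alpha$ is irreducible over $\K$ --- is unjustified. Irreducibility of $q$ is not cosmetic in your argument: you use $q\in\Pirr$ both to invoke (\ref{Krasner}) and, more essentially, to apply $\Delta(p_\alpha,p_\beta)\le\max\{\Delta(p_\alpha,q),\Delta(q,p_\beta)\}$, whose proof needs the roots of the middle polynomial to form a single $G_{\K}$-orbit; if $q$ had a second irreducible factor with roots far from those of $p_\alpha$, elements $\beta$ near those roots would lie in $E(q/\omega^n)$ without $p_\beta\in B(p_\alpha,r)$, and your chain of inequalities would break precisely there. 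The paper deals with this case by a separate reduction: write $p_\alpha(X)=\widetilde p(X^{l^m})$ with $\widetilde p$ separable, apply the separable case to $\widetilde p$, set $q(X)=\widetilde q(X^{l^m})$ (no longer claimed irreducible --- see the remark immediately after the proof), and transfer the estimate back through $|\alpha'^{\,l^m}-\alpha_i^{\,l^m}|=|\alpha'-\alpha_i|^{l^m}$. Alternatively, your scheme can be repaired without any irreducibility: for $q$ close to $p_\alpha$ every root $\gamma_j$ of $q$ satisfies $\min_i|\gamma_j-\alpha_i|\le|p_\alpha(\gamma_j)|^{1/d}=|(p_\alpha-q)(\gamma_j)|^{1/d}$, which is small because the roots of $q$ stay bounded; then for $\beta$ with $|q(\beta)|\le|\omega|^n<r^d$ some root $\gamma_{j_0}$ has $|\beta-\gamma_{j_0}|<r$, and $\Delta(p_\alpha,p_\beta)\le|\beta-\alpha_i|\le\max\{|\beta-\gamma_{j_0}|,|\gamma_{j_0}-\alpha_i|\}<r$ directly, with no appeal to (\ref{Krasner}) or to the ultrametric structure of $\Pirr$.
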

\begin{proof}
Given $\alpha\in\abK$, let $p=p_{\alpha}\in\Pirr$ and $d$ the degree of $p(X)$. Let $B=B(p,r)$, where $r$ is any given positive real number. We suppose first that $p(X)$ is separable.  We choose $n\in\N$ such that $|\omega^n|< M_{p}(r)$. Let $\Delta(p)$ be the minimum distance of the distinct roots $\alpha=\alpha_1,\ldots,\alpha_d$ of $p$ in $\abK$. Then there exists $\delta>0$ such that if $q\in K[X]$ is monic of degree $d$ and $|q-p|_G< \delta$ (where $|\;\;|_G$ is the absolute value associated with the Gauss valuation $v_G$), then for each $\alpha_i\in\Omega_{p}$, $i=1,\ldots,d$, there exists a unique root $\beta_i\in\Omega_q$ such that $|\alpha_i-\beta_i|<\min\{\Delta(p),r\}$ (see \cite[Chapt. 5, p. 139]{Rib}). In particular, $\Delta(q,p)<r$  and $q(X)$ is irreducible over $\K$ (hence also over $K$; this holds by Krasner's Lemma, see \cite[Chapt. 5, G. p. 139]{Rib}). Moreover, up to a choice of a smaller $\delta$, we may also suppose that $|q(\alpha)|\leq |\omega^n|$, so that the polynomial $\frac{q(X)}{\omega^n}$ is integral at $\alpha$. We claim now that $M_{p}(\rho)=M_{q}(\rho)$ for each $\rho\in\R$, and by the very definition (\ref{Mpr}) it is sufficient to show that $|\alpha_1-\alpha_i|=|\beta_1-\beta_i|$, for each $i=2,\ldots,d$. Indeed, we have $$|\beta_1-\beta_i|=|\beta_1-\alpha_1+\alpha_1-\alpha_i+\alpha_i-\beta_i|=|\alpha_1-\alpha_i|$$
 since $|\alpha_1-\alpha_i|\geq \Delta(p)>|\beta_j-\alpha_j|$ for $j=1,i$. Finally, we have 
$$W_{\alpha'}\in E\left(\frac{q(X)}{\omega^n}\right)\Leftrightarrow |q(\alpha')|=M_q(\Delta(q,p_{\alpha'}))=M_p(\Delta(q,p_{\alpha'}))\leq|\omega^n|< M_p(r)\Rightarrow \Delta(q,p_{\alpha'})< r$$
since $M_p(\cdot)$ is a strictly increasing function. Therefore, $\Delta(p_{\alpha'},p)\leq\max\{\Delta(p_{\alpha'},q),\Delta(q,p)\}<r$, thus, $p_{\alpha'}\in B$.

We suppose now that $p(X)$ is inseparable. Let $l>0$ be the characteristic of $K$ and let $p(X)=\widetilde{p}(X^{l^m})$, where $m\geq1$ and $\widetilde{p}\in\Pirr$ is separable. Note that if $\Omega_p=\{\alpha=\alpha_1,\ldots,\alpha_t\}$ (where $t<d$ is the number of distinct roots of $p(X)$), then $\Omega_{\widetilde{p}}=\{\alpha_1^{l^m},\ldots,\alpha_t^{l^m}\}$. We set $\gamma=\alpha^{l^m}$ and $\gamma_i=\alpha_i^{l^m}$, for $i=2,\ldots,t$. Let $\widetilde{r}=r^{l^m}$. By the first part of the proof, there exist $\widetilde{q}\in\Pirr\cap K[X]$ and $n\in\N$ such that $\frac{\widetilde{q}(X)}{\omega^n}$ is integral at $\gamma=\alpha^{l^m}$ and for each $W_{\gamma'}\in E(\frac{\widetilde{q}(X)}{\omega^n})$ we have $p_{\gamma'}\in B(\widetilde{p},\widetilde{r})$. We set $q(X)=\widetilde{q}(X^{l^m})$. We have that $\frac{q(X)}{\omega^n}$ is integral at $\alpha$ since $q(\alpha)=\widetilde{q}(\gamma)$. Moreover, let $W_{\alpha'}\in E(\frac{q(X)}{\omega^n})$. This implies that $W_{\gamma'}\in E(\frac{\widetilde{q}(X)}{\omega^n})$, where $\gamma'=\alpha'^{l^m}$. Therefore, $\Delta(p_{\gamma'},\widetilde{p})<\widetilde{r}$. Now, 
\begin{align*}
\Delta(p_{\gamma'},\widetilde{p})=\min\{|\gamma'-\gamma_i| \mid i=1,\ldots,t\}&=\\ 
=\min\{|\alpha'^{l^m}-\alpha_i^{l^m}| \mid i=1,\ldots,t\}&=\min\{|\alpha'-\alpha_i|^{l^m} \mid i=1,\ldots,t\}=\Delta(p_{\alpha'},p)^{l^m}
\end{align*}
Hence, $\Delta(p_{\alpha'},p)<r$, as wanted.

For the last statement, the finite intersections of the sets $E(\frac{q(X)}{\omega^n})$, for $q\in K[X]$ and $n\in\N$, form a basis for a topology on $\mathfrak{W}$ which is weaker than the Zariski topology on $\mathfrak{W}$. Let $E(\varphi)$, $\varphi\in K(X)$ be an element of the subbasis for the Zariski topology. Let $W_{\alpha}\in E(\varphi)$, $\alpha\in\abK$. By continuity of $\varphi$, there exists a neighborhood $B=B(p_{\alpha},r)$ of $p_{\alpha}(X)$, $r>0$, such that for all $p_{\alpha'}\in B$, we have $\varphi\in W_{\alpha'}$ (see Remark \ref{continuity rat fun}). Then by what we have proved above, there exists a basic open set $E=E(\frac{q(X)}{\omega^n})$, where $q\in K[X]$ and $n\in\N$, such that $W_{\alpha}\in E\subseteq E(\varphi)$, which shows that $\{E(\frac{q(X)}{\omega^n})\mid q\in K[X],n\in\N\}$ is a subbasis for the Zarisky topology on $\mathfrak{W}$.
\end{proof}
Note that in the case $\abK$ is a separable extension of $\K$, the above proof shows that we may also suppose that the polynomials $q(X)$ above are irreducible over $\K$ (hence also over $K$). Also, the same Proposition shows that $\{\widehat{E}(\frac{q(X)}{\pi^n})\mid q\in K[X],n\in\N\}$ is a subbasis for the Zarisky topology on $\widehat{\mathfrak{W}}$. These results are the main ingredients for the proof of our last theorem.
\vskip0.3cm

\begin{Thm}\label{homeomorphism}
The topological spaces $\mathfrak{W}$, $\widehat{\mathfrak{W}}$, $\mathfrak{W}_{K[X]}$, $\widehat{\mathfrak{W}}_{\K[X]}$ and $\Pirr$ are homeomorphic.
\end{Thm}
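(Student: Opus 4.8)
The plan rests on Theorem \ref{equivalent conditions}, which exhibits the five sets $\Pirr$, $\mathfrak{W}$, $\widehat{\mathfrak{W}}$, $\mathfrak{W}_{K[X]}$, $\widehat{\mathfrak{W}}_{\K[X]}$ as quotients of $\abK$ by one and the same equivalence relation ($\alpha_1\sim\alpha_2$ iff they are $\K$-conjugate), so that the obvious maps sending $p_\alpha$, $W_\alpha$, $\mcW_\alpha$, $W_\alpha\cap K[X]$, $\mcW_\alpha\cap\K[X]$ to one another are \emph{compatible} bijections. It therefore suffices to prove that the four bijections $\Pirr\leftrightarrow\mathfrak{W}$, $\mathfrak{W}\leftrightarrow\widehat{\mathfrak{W}}$, $\mathfrak{W}\leftrightarrow\mathfrak{W}_{K[X]}$ and $\widehat{\mathfrak{W}}\leftrightarrow\widehat{\mathfrak{W}}_{\K[X]}$ are homeomorphisms, since this chain connects all five spaces. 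Throughout, fix once and for all an element $\omega\in P$ with $\omega\neq0$.

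First I would treat $\Phi\colon\Pirr\to\mathfrak{W}$, $p\mapsto W_\alpha$ for any $\alpha\in\Omega_p$. To see $\Phi$ is continuous it is enough to pull back a subbasic open $E(\varphi)$, $\varphi\in K(X)$: if $p\in\Phi^{-1}(E(\varphi))$ and $\alpha\in\Omega_p$, then $\varphi$ is integral at $\alpha$, and Remark \ref{continuity rat fun} gives $\delta>0$ with $\varphi\in W_{\alpha'}$ for all $\alpha'$ such that $p_{\alpha'}\in B(p,\delta)$, whence $B(p,\delta)\subseteq\Phi^{-1}(E(\varphi))$. For continuity of $\Phi^{-1}$ I would show $\Phi$ is open: given a ball $B(p,r)$ and $W_\alpha\in\Phi(B(p,r))$ we have $\Delta(p_\alpha,p)<r$, hence $B(p_\alpha,r)=B(p,r)$ by the ultrametric inequality, and Proposition \ref{equivalent subbasis} applied to $\alpha$ and $r$ produces $q\in K[X]$, $n\in\N$ with $W_\alpha\in E(q/\omega^n)\subseteq\{W_{\alpha'}\mid p_{\alpha'}\in B(p_\alpha,r)\}=\Phi(B(p,r))$; since $E(q/\omega^n)$ is open in $\mathfrak{W}$, so is $\Phi(B(p,r))$. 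Thus $\Phi$ is a homeomorphism. In each remaining case bijectivity is exactly Theorem \ref{equivalent conditions}.

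For $\mathfrak{W}\leftrightarrow\widehat{\mathfrak{W}}$ I would match subbases: by Proposition \ref{equivalent subbasis} the sets $E(q/\omega^n)$ ($q\in K[X]$, $n\in\N$) form a subbasis of $\mathfrak{W}$, and by the remark following it the sets $\widehat E(q/\omega^n)$ form a subbasis of $\widehat{\mathfrak{W}}$; since $W_\alpha\in E(q/\omega^n)\iff v(q(\alpha))\geq n\,v(\omega)\iff\mcW_\alpha\in\widehat E(q/\omega^n)$, the bijection $W_\alpha\mapsto\mcW_\alpha$ carries the first subbasis onto the second and is a homeomorphism. For the two contraction maps the crucial (but elementary) point is that $q(X)/\omega^n$, written as a fraction though it is, has coefficients in $K$ and so lies in $K[X]$; hence the subbasis of Proposition \ref{equivalent subbasis} is already contained in $\{E(f)\mid f\in K[X]\}$, which is therefore itself a subbasis of $\mathfrak{W}$. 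Since $W_\alpha\mapsto W_\alpha\cap K[X]$ sends $E(f)=\{W_\alpha\mid f\in W_\alpha\}$ onto $E_{K[X]}(f)=\{W_\alpha\cap K[X]\mid f\in W_\alpha\cap K[X]\}$ for every $f\in K[X]$, and the $E_{K[X]}(f)$ are by definition a subbasis of $\mathfrak{W}_{K[X]}$, this bijection is a homeomorphism; the same argument over $\K$ gives $\widehat{\mathfrak{W}}\cong\widehat{\mathfrak{W}}_{\K[X]}$, and the proof is complete.

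The substantive step is the openness of $\Phi$, which rests entirely on Proposition \ref{equivalent subbasis} and hence on Krasner's formula (\ref{Krasner}) and the strict monotonicity of $M_p(r)$; once these are available the theorem is largely bookkeeping. I expect the only points needing care to be (a) verifying that the bijections of Theorem \ref{equivalent conditions} are mutually compatible, so that establishing four of them suffices, and (b) the observation that $q/\omega^n\in K[X]$, which is what lets the Zariski subbasis of $\mathfrak{W}$ already be witnessed by polynomials and thereby reduces the topologies of $\mathfrak{W}_{K[X]}$ and $\widehat{\mathfrak{W}}_{\K[X]}$ to ones already under control.
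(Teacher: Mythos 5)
Your proposal is correct and follows essentially the same route as the paper: bijections from Theorem \ref{equivalent conditions}, continuity of $\Pirr\to\mathfrak{W}$ via Remark \ref{continuity rat fun}, and openness (and the comparison of $\mathfrak{W}$, $\widehat{\mathfrak{W}}$ and the contracted spaces) via the subbasis $\{E(q/\omega^n)\}$ of Proposition \ref{equivalent subbasis}. Your subbasis-matching phrasing for $\mathfrak{W}\leftrightarrow\widehat{\mathfrak{W}}$ and the explicit remark that $q/\omega^n\in K[X]$ merely make explicit what the paper compresses into ``the same proof shows''.
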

\begin{proof}
Let $\mathcal I$ be  the bijection  from $\widehat{\mathfrak{W}}$ to $\mathfrak{W}$ defined by $\mcW_{\alpha}\mapsto W_{\alpha}$, for each $\alpha\in\abK$ (Theorem \ref{equivalent conditions}). Clearly, this map is continuous, since for $\varphi\in K(X)$ we have $\mathcal{I}^{-1}(E(\varphi))=\{\mathcal{W}_{\alpha} \mid W_{\alpha}\ni \varphi\}=\\ \{\mathcal{W}_{\alpha} \mid \mathcal{W}_{\alpha}\ni \varphi\}=\widehat{E}(\varphi)$. Conversely, let $\widehat{E}(\psi)$, $\psi\in\K(X)$, be a basic open set and let $W_{\alpha}$ be an element of $\mathcal{I}(\widehat{E}(\psi))$. Let $\omega\in P$, $\omega\not=0$. By Proposition \ref{equivalent subbasis}, there exist $q\in K[X]$ and $n\in\N$ such that $W_{\alpha}\in E(\frac{q(X)}{\omega^n})\subseteq\mathcal{I}(\widehat{E}(\psi))$. Hence, $\mathcal{I}$ is open and $\mathfrak{W}$ and $\widehat{\mathfrak{W}}$ are homeomorphic. 

The same proof shows that the maps $\mathfrak{W}\to\mathfrak{W}_{K[X]}$, $W_{\alpha}\mapsto W_{\alpha}\cap K[X]$, and $\widehat{\mathfrak{W}}\to\widehat{\mathfrak{W}}_{\K[X]}$, $\mcW_{\alpha}\mapsto \mcW_{\alpha}\cap \K[X]$, for $\alpha\in\abK$, are homeomorphisms (they are bijections by Theorem \ref{equivalent conditions}). 

Finally, we prove that $\Phi:\Pirr\to\mathfrak{W}$, $p_{\alpha}\mapsto W_{\alpha}$, $\alpha\in\abK$, is an homeomorphism.
We prove first that $\Phi$ is continuous. It is sufficient to show that, for any $\varphi\in K(X)$, the preimage via $\Phi$ of $E(\varphi)$ in $\Pirr$ is open. Let $q\in\Pirr$ be an element of this preimage, so that $\varphi\in W_{\beta}$, where $\beta$ is any root of $q(X)$. We have to show that  there exists a neighborhood of $q(X)$ in $\Pirr$ which is contained in $\Phi^{-1}(E(\varphi))$, that is, if $\Delta(p,q)$, $p\in\Pirr$, is sufficiently small, then $\Phi(p)=W_{\beta'}$ is in $E(\varphi)$, where $\beta'\in\abK$ is any root of $p(X)$.  But this holds because  a rational function $\varphi$ is a continuous function with respect to the $v$-adic topology on its domain of definition, see Remark \ref{continuity rat fun}. 

Next, we show that $\Phi$ is an open map. Let $B(p,r)$ be an open neighborhood of a given $p\in\Pirr$ for some $r\in\R$,  $r>0$, and let $W_{\alpha}\in \Phi(B(p,r))\Leftrightarrow \Delta(p_{\alpha},p)<r$. By well-known properties of ultrametric distances, $B(p,r)=B(p_{\alpha},r)$. Hence, without loss of generality, we may suppose that $p(\alpha)=0$. Let $\omega\in P$, $\omega\not=0$. By Proposition \ref{equivalent subbasis}, there exist $q\in K[X]$ and $n\in\N$ such that $W_{\alpha}\in E(\frac{q(X)}{\omega^n})\subseteq\Phi(B(p,r))$, so the map $\Phi$ is open, hence an homeomorphism.
\end{proof}

\vskip0.2cm

\subsection*{Acknowledgements}
The author warmly thanks the referee for his/her mostly valuable suggestions which allow to state the main results in greater generality. The author wishes also to thank Bruce Olberding for precious and insightful comments which improved the quality of the paper.

\end{document}